\newtheorem{theorem}{Theorem}[section]
\newtheorem{lemma}[theorem]{Lemma}
\newtheorem{proposition}[theorem]{Proposition}
\theoremstyle{definition}
\newtheorem{definition}[theorem]{Definition}
\newtheorem{remark}[theorem]{Remark}
\newenvironment{customprop}[1]
  {\innercustomprop}%
  {\endinnercustomprop}
\numberwithin{equation}{section}
\theoremstyle{definition}
\numberwithin{equation}{section}
\acrodef{KPZ}{Kardar--Parisi--Zhang}
\acrodef{SHE}{Stochastic Heat Equation}
\acrodef{LDP}{Large Deviation Principle}
\renewcommand{\Pr}{\mathbb{P}}
\newcommand{\Ex}{\mathbb{E}}
\renewcommand{\d}{\mathrm{d}}	
\newcommand{\nw}{\mathbf{nw}}
\newcommand{\hyph}{\mathbf{Hyp}_{1}(\mathfrak{p})}
\newcommand{\hypz}{\mathbf{Hyp}_{2}(\mathfrak{q})}
\newcommand{\norm}[1]{\big[#1\big]}
\newcommand{\Con}{\mathrm{C}} 
\newcommand{\m}{\mathsf}
\newcommand{\R}{\mathbb{R}} 
\newcommand{\Z}{\mathbb{Z}} 
\newcommand{\e}{\varepsilon}
\newcommand{\h}{\mathfrak{h}}
\newcommand{\calH}{\mathcal{H}}
\newcommand{\calZ}{\mathcal{Z}}
\newcommand{\calV}{\mathcal{V}}
\newcommand{\B}{\mathfrak{B}}
\renewcommand{\hat}{\widehat}
\newcommand{\til}{\widetilde}
\renewcommand{\v}{\mathcal{V}}
\title[Temporal Increments of the KPZ equation]{Temporal Increments of the KPZ equation with general initial data}
\author[S.\ Das]{Sayan Das}
\address{S.\ Das,
	Department of Mathematics, Columbia University,
	\newline\hphantom{\quad \ \ S. Das}
	2990 Broadway, New York, NY 10027 USA
}
\email{sayan.das@columbia.edu}
\subjclass[2020]{%
	60H15  	
}
\keywords{%
	KPZ equation, stochastic heat equation, fractional Brownian motion.
}
\begin{document}

\begin{abstract} We consider the Cole-Hopf solution of the (1+1)-dimensional KPZ equation $\mathcal{H}^f(t,x)$ started with initial data $f$. In this article, we study the sample path properties of the \textit{KPZ temporal process} $\calH_t^f:=\mathcal{H}^f(t,0)$. We show that for a very large class of initial data which includes any deterministic continuous initial data that grows slower than parabola $(f(x)\ll 1+x^2$) as well as narrow wedge and stationary initial data, temporal increments of $\calH_t^f$ are well approximated by increments of a fractional Brownian motion of Hurst parameter $\frac14$. As a consequence, we obtain several sample path properties of KPZ temporal properties including variation of the temporal process, law of iterated logarithms, modulus of continuity and Hausdorff dimensions of the set of points with exceptionally large increments.
\end{abstract}

\maketitle

\section{Introduction}
In this article we study the local sample path properties of the \ac{KPZ} equation. KPZ equation is a stochastic partial differential equation formally given by
\begin{align*}
	\partial_t\calH=\tfrac12\partial_{xx}\calH+\tfrac12(\partial_x\mathcal{H})^2+\xi, \qquad \calH=\calH(t,x), \quad (t,x)\in [0,\infty)\times\mathbb{R}
\end{align*}
where $\xi=\xi(t,x)$ is the Gaussian space-time white noise. Introduced in \cite{kpz}, the KPZ equation is a popular model for interfaces of random growth. However, the equation is ill-posed due to the presence of the non-linear term $(\partial_x\calH)^2$. To make sense of the equation one considers $\calZ:=e^{\calH}$ which formally solves the \ac{SHE} with multiplicative noise:
\begin{align}\label{she}
	\partial_t\calZ=\tfrac{1}{2}\partial_{xx}\calZ+\xi\calZ,  \qquad \calZ=\calZ(t,x), \quad (t,x)\in [0,\infty)\times\mathbb{R}.
\end{align}
The \ac{SHE} is known to be well-posed and has a well-developed solution theory based on It\^{o} integral and chaos expansion~\cite{walsh1986,bertini1995stochastic,quastel2011introduction,corwin2012kardar}. In particular, {\eqref{she}  with the initial data $\calZ(0,x)=g(x)$ can be rewritten in Duhamel form as}
\begin{align}\label{fkdc}
	\calZ(t,x) = \int_{\R} p_t(x-y)g(y)\,\d y + \int_0^t\int_{\R} p_{t-s}(x-y) \calZ(s,y) \xi(\d s, \d y).
\end{align}
Here $ p_t(x) := (2\pi t)^{-1/2} \exp(-x^2/(2t))$ is the standard heat kernel. {There exists a unique mild solution for the above equation  with the initial data $\calZ(0,x)=g(x)$ independent of the noise $\xi$ and satisfying}
\begin{align*}
    {\Ex \left[\sup_{A\subset [-n,n]} \int_{A} g(x)dx\right] \le \Con e^{\Con n}}
\end{align*}
{for some constant $C>0$ and for all $n\in \Z_{>0}$}. 

For a function-valued initial data $\calZ(0,x)\ge 0$ that is not identically zero, it is known
$\calZ(t,x)>0$ for all $t > 0$ and $x\in \R$ almost surely \cite{mueller1991support}. Thus $ \calH := \log \calZ$ is well defined and is termed as the Cole-Hopf solution of the KPZ equation. This notion of solution coincides with other notions of solutions, such as  regularity structures \cite{hairer2013solving,hairer2014theory}, paracontrolled distributions \cite{gubinelli2015paracontrolled,gubinelli2017kpz}, and energy solutions \cite{gonccalves2014nonlinear,gubinelli2018energy}, under suitable assumptions. 

An often considered initial data is to start the \ac{SHE} from a Dirac delta at the origin, i.e., 
\begin{align}\label{deltaic}
	\calZ^{\nw}(0,x):=\delta_0(x)
\end{align}	
which is referred to as
the narrow wedge initial data for the KPZ equation $\calH^{\nw}$. For such an initial data, \cite{flo14} established the positivity for $\calZ^{\nw}(t,x)$ so that the
Cole-Hopf solution $\calH^{\nw} := \log \calZ^{\nw}$ is well-defined.\\

In this paper, we consider a general class of initial data for the KPZ equation which we now describe.

\begin{definition}\label{hyp1} Fix $\mathfrak{p}=(\theta, \delta, \lambda,\kappa,M) \in (0,\infty)^5$.  The $\hyph$ class is a class of possible initial data for the KPZ equation consisting of:
	
	\begin{enumerate}
		\item \label{acond} Narrow wedge initial data corresponding to delta initial data \eqref{deltaic} for \ac{SHE}.
		
		\item \label{bcond} Brownian initial data: The two sided Brownian motion $B(x)$ independent from the space-time white noise $\xi$.
		
		\item \label{ccond} Measurable deterministic functions $f: \R\to \R\cup \{-\infty\}$ satisfying the following two conditions:
		\begin{enumerate}
			\item \label{ci} {$f(x)\le \lambda(1+|x|)^{2-\delta}$},
			
			\item \label{cii} there exist a subinterval $\mathcal{I} \subset [-M,M]$ with $|\mathcal{I}|=\theta$ such that $f(y)\ge -\kappa$ for all $y\in \mathcal{I}$. 
		\end{enumerate}
	\end{enumerate}		
\end{definition}

We consider the \ac{SHE} $\calZ^f(t,x)$ \eqref{she} started from initial data $e^f$ where $f$ belong to the above class and define the KPZ equation with initial data $f$ via the Cole-Hopf transform $\calH^f(t,x):=\log \calZ^f(t,x)$. We consider the \textit{KPZ temporal process} $\mathcal{H}^f(t,0)$ as a function of time with spatial point fixed at $0$. We use the abbreviated notation $\mathcal{Z}_t^f(x):=\mathcal{Z}^f(t,x)$, $\mathcal{Z}_t^f:=\mathcal{Z}_t^f(0)$, $\mathcal{H}_t^f(x):=\mathcal{H}^f(t,x)$, and $\mathcal{H}_t^f:=\mathcal{H}_t^f(0)$. Our main result, which is stated below, shows that the KPZ temporal process $\mathcal{H}_t^f$ can be approximated by a fractional Brownian motion at a local level uniformly over compact sets of $(0,\infty)$.

\begin{theorem}\label{t:main} Fix $\mathfrak{p}\in (0,\infty)^5$, $k\ge 2$, $T>1$, and $\beta\in (0,\frac25)$. Consider any $f$ from the $\hyph$ class defined in Definition \ref{hyp1}. One can construct a probability space consisting of the KPZ temporal process $\mathcal{H}_t^f$ started with $f$, a fractional Brownian motion $\B_t$ with Hurst parameter $\frac14$ (defined in Definition \ref{fbm}), such that the process: $\{\calH_t^f-(2/\pi)^{\frac14}\B_t: t\in [T^{-1},T]\}$ is locally H\"older continuous with exponent $\beta$ and furthermore  $$\Ex\left[\norm{\calH_t^f-(2/\pi)^{\frac14}\B_t}_{C^{0,\beta}[T^{-1},T]}^k\right]<\infty.$$
\end{theorem}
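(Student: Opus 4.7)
The strategy is to construct the fractional Brownian motion $\B_t$ jointly with $\calH^f_t$ on a common probability space and to establish two-point moment bounds on their difference, from which Kolmogorov--Chentsov delivers the stated H\"older regularity and moment bound on the H\"older seminorm. The coupling is motivated by the heuristic that on short time scales the KPZ equation behaves like its Edwards--Wilkinson linearization $\partial_t u = \tfrac12 \partial_{xx} u + \xi$, whose solution at a fixed spatial point has temporal increments with variance of order $(2/\pi)^{1/2}|t-s|^{1/2}$ --- exactly the variance of $(2/\pi)^{1/4}(\B_t - \B_s)$ for an fBM of Hurst $\tfrac14$. Accordingly, I would construct $\B_t$ through a Mandelbrot--van Ness-type Wiener integral against a Brownian motion extracted from the driving white noise $\xi$, chosen so that the first-chaos part of $\calH^f_t - \calH^f_s$ aligns with $(2/\pi)^{1/4}(\B_t - \B_s)$.

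\textbf{Key moment bound.} The technical heart of the proof is to establish that for $s, t \in [T^{-1}, T]$ with $|t-s| \le 1$ and every integer $k' \ge 2$,
\begin{equation*}
\Ex\Bigl[\bigl|(\calH^f_t - \calH^f_s) - (2/\pi)^{1/4}(\B_t - \B_s)\bigr|^{k'}\Bigr] \le C(\mathfrak{p}, k', T)\,|t-s|^{(2/5)\,k'}.
\end{equation*}
I would first prove this for narrow wedge initial data by exploiting one- and two-point tail bounds for $\calH^{\nw}(t,0)$ available from Fredholm-determinant formulas and known KPZ integrability results, combined with the explicit Wiener-integral representation of $\B$. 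For general $f$ in the $\hyph$ class I would then use the semigroup representation $\calZ^f(t,0) = \int \calZ^{\nw,y}(t,0)\,e^{f(y)}\,\d y$, where $\calZ^{\nw,y}$ denotes the narrow-wedge SHE started at $y$, to transfer the estimate from the narrow wedge case. The sub-parabolic upper bound on $f$ ensures that the integrand concentrates on a bounded spatial region, while the pointwise lower bound on a subinterval of $[-M, M]$ keeps the denominator $\calZ^f_t$ non-degenerate; together these allow the bound to pass to the logarithmic scale uniformly over $f \in \hyph$.

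\textbf{Conclusion and main difficulty.} Once the above moment bound holds for arbitrarily large $k'$, the Kolmogorov--Chentsov continuity theorem yields a modification of $\calH^f_t - (2/\pi)^{1/4}\B_t$ that is locally H\"older continuous on $[T^{-1}, T]$ with every exponent $\beta < \tfrac25$ and whose H\"older seminorm has finite $k$-th moment. The principal obstacle is the moment bound itself: one must identify the Gaussian component of $\calH^f_t - \calH^f_s$ that couples to $(2/\pi)^{1/4}\B$ with the exact prefactor, and control the non-Gaussian remainder at a rate strictly better than $|t-s|^{1/4}$ (namely of order $|t-s|^{2/5}$) --- uniformly across the very heterogeneous $\hyph$ class, which includes narrow wedge (singular), Brownian (random), and deterministic sub-parabolic initial data. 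The structural assumptions built into Definition \ref{hyp1} are precisely what make this uniform cancellation tractable, with the parameters $(\theta, \delta, \lambda, \kappa, M)$ entering only through the constant $C(\mathfrak{p}, k', T)$.
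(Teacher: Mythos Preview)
Your overall architecture --- a two-point moment bound on $(\calH^f_t-\calH^f_s)-(2/\pi)^{1/4}(\B_t-\B_s)$ followed by Kolmogorov--Chentsov --- matches the paper exactly, and the coupling of $\B$ to the driving noise via the Edwards--Wilkinson linearization is also the right idea (the paper realises it through the linear stochastic heat equation $\calV_t=\int_0^t\!\int_\R p_{t-s}(y)\xi(\d s,\d y)$ and the decomposition $\calV_t=(2/\pi)^{1/4}\B_t+R_t$ from \cite{davar}). Where your plan diverges, and where it has a real gap, is in how the moment bound is obtained.

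You propose to prove the increment bound at the $\calH$ level for narrow wedge using one- and two-point tail bounds from Fredholm determinants, and then pass to general $f$ via the convolution formula $\calZ^f=\int \calZ^{\nw,y}e^{f(y)}\d y$. Two problems: first, usable two-point \emph{temporal} tail bounds for $\calH^{\nw}$ are not available from integrability, so the narrow-wedge step as stated is not a viable route; second, the convolution identity is linear in $\calZ$, not in $\calH=\log\calZ$, so an $\calH$-level increment estimate for narrow wedge does not transfer to general $f$ by convolution. The paper avoids both issues by first establishing the comparison at the \emph{SHE} level for \emph{all} $f$ in the larger $\hypz$ class (Theorem~\ref{t:comp}): via the mild formulation \eqref{fkdc} one writes $\calZ_{t+\e}-\calZ_t-\calZ_t(\calV_{t+\e}-\calV_t)$ as a sum of explicit stochastic integrals and bounds their $k$-th moments by BDG plus uniform, spatial and temporal moment estimates for $\calZ^f$ (Propositions~\ref{umomf}, \ref{spmomf}, \ref{tpmomf}); the convolution formula is used only here, to lift these SHE moment estimates from narrow wedge to general $f$. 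The passage from $\calZ$ to $\calH$ is then a separate, elementary step based on $|e^x-1-x|\le x^2e^{|x|}$ together with uniform \emph{negative} moment bounds $\Ex[\calZ_t^{-2k}]<\infty$, which in turn come from the one-point tail estimates for $\calH^f_t$ (Proposition~\ref{hftail}) --- this is precisely where the lower-bound condition in $\hyph$ enters, and why the SHE-level result alone (for $\hypz$) does not suffice. In short: the missing idea in your plan is to do the Gaussian comparison entirely at the multiplicative (SHE) scale via BDG, and only afterwards take logarithms using negative-moment control.
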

Here $\norm{f}_{C^{0,\beta}[a,b]}$ denotes the H\"older coefficient of $f$ which is defined as 
\begin{align*}
	\norm{f}_{C^{0,\beta}[a,b]}:=\sup_{x\neq y\in [a,b]}\frac{|f(x)-f(y)|}{|x-y|^{\beta}}.
\end{align*}

\begin{remark}  In plain words, Theorem \ref{t:main} shows that one can put $\mathcal{H}_t^f$, $\B_t$ in a common probability space such that ${\mathcal{H}_t^f-(2/\pi)^{\frac14}\B_t}$ on any compact interval of $(0,\infty)$ is H\"older continuous with H\"older exponent less than $\frac25$. Since $\B_t$ is itself a H\"older continuous process with  H\"older exponent less than $\frac14$, Theorem \ref{t:main} entails that $\mathcal{H}_t^f$ is almost surely H\"older continuous with H\"older exponent being less than $\frac14$ as well. This recovers the result from  \cite[Theorem 2.2]{bertini1995stochastic} (see also \cite{walsh1986}) which shows that the solution to the \ac{SHE} (for a wide class of functional initial data including delta initial data) as a process in time is almost surely H\"older continuous with H\"older exponents less than $\frac14$.  
\end{remark}

The sharp comparison in Theorem \ref{t:main} allows us to obtain several sample path properties of the KPZ temporal process $\mathcal{H}_t^f$ as a consequence.  To state the results, we recall the notion of $\alpha$-variation of a function. Given a function $g :[s,t]\to \mathbb{R}$ and $\alpha>0$, define the $\alpha$-variation of $g$ at scale $\e$ by
\begin{align*}
	\v_{\alpha,\e}(g):=\sum_{u\in[s+\e,t]\cap \e \Z} |g(u)-g(u-\e)|^{\alpha}.
\end{align*}
With this definition in place, we are now ready to state the theorem regarding the temporal increments of the KPZ equation for initial data drawn from $\hyph$.
\begin{theorem}\label{t:cor}  Fix $\mathfrak{p}\in (0,\infty)^5$ and consider any $f$ from the $\hyph$ class defined in Definition \ref{hyp1}. Consider the KPZ temporal process $\calH_t^f$ started from $f$. The following holds:
	\begin{enumerate}
		\item Temporal increments of $\calH^f$ are asymptotically normal. For $0<t_1<t_2<\cdots<t_k<\infty$, as $\e \downarrow 0$,
		\begin{align*}
			\big((\pi/2)^{\frac14}\e^{-\frac14}[\calH_{t_i+\e}^f-\calH_{t_i}^f]\big)_{i=1}^k \stackrel{d}{\to} \mathcal{N}_k(\mathbf{0},\mathbf{I}_k).
		\end{align*}
		where $\mathcal{N}_k(\mathbf{0},\mathbf{I}_k)$ is a standard Gaussian $k$-vector with mean $\mathbf{0}$ and variance-covariance matrix $\mathbf{I}_k$. 
		
		\item  $\calH^f$ has quartic variation in the sense that for any $[s,t] \subset (0,\infty)$, we have
		\begin{align*}
			\v_{4,\e}(\calH^f\mid_{[s,t]}) \stackrel{p}{\to} \tfrac{6}{\pi}(t-s).
		\end{align*}
		\item For any $t>0$ with probability $1$ we have
		\begin{align*}
			\limsup_{\e\downarrow 0} \frac{\calH_{t+\e}^f-\calH_t^f}{\e^{1/4}\sqrt{\log\log(1/\e)}}= \left(\frac{8}{\pi}\right)^{1/4}.
		\end{align*}
		\item With probability $1$ we have
		\begin{align*}
			\limsup_{\e\downarrow 0}\frac{1}{\e^{\frac14}\sqrt{\log (1/\e)}}\sup_{\substack{1\le s<t\le 2\\ |t-s|\le \e}} |\calH_t^f-\calH_s^f|=\left(\frac{8}{\pi}\right)^{1/4}.
		\end{align*}
		\item Set
		\begin{align*}
			E(\alpha):=\left\{t\in [1,2]: \limsup_{\e\downarrow 0} \frac{|\calH_{t+\e}^f-\calH_t^f|}{\e^{\frac14}\sqrt{\log (1/\e)}} \ge \alpha\left(\frac{8}{\pi}\right)^{\frac14} \right\}.
		\end{align*}
		We have $\dim_H E(\alpha)=1-\alpha^2$ almost surely where $\dim_H$ denotes the Hausdorff dimension.
	\end{enumerate}
\end{theorem}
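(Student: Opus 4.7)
The plan is to invoke Theorem~\ref{t:main} on an interval $[T^{-1},T]$ containing all the times of interest, pick $\beta\in(\tfrac14,\tfrac25)$, and then transfer each item from the corresponding known property of the fractional Brownian motion $\B_t$ with Hurst parameter $\tfrac14$. Writing $R_t := \calH_t^f - (2/\pi)^{1/4}\B_t$, Theorem~\ref{t:main} gives that $R$ is a.s.\ $\beta$-H\"older on $[T^{-1},T]$ with all moments of its H\"older seminorm finite. Since $\beta>\tfrac14$, the increment $R_{t+\e}-R_t = O(\e^\beta)$ is negligible at the scale $\e^{1/4}$ that governs all five statements, so each claim reduces, up to a vanishing error, to the analogous statement for $(2/\pi)^{1/4}\B$.

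For (1), $\e^{-1/4}(\B_{t_i+\e}-\B_{t_i})$ is an exact standard Gaussian and the off-diagonal covariance at lag $t_j-t_i$ is, via $\tfrac12[|t_j-t_i+\e|^{1/2}+|t_j-t_i-\e|^{1/2}-2|t_j-t_i|^{1/2}]$, of order $\e^{3/2}|t_j-t_i|^{-3/2}$ after normalization by $\e^{-1/2}$, hence vanishes; the prefactors match as $(\pi/2)^{1/4}(2/\pi)^{1/4}=1$, and the H\"older remainder contributes $\e^{-1/4}\cdot O(\e^\beta)\to 0$, so joint Gaussianity inherited from fBM yields the limit $\mathcal{N}_k(\mathbf{0},\mathbf{I}_k)$. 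For (2), a direct $L^2$ law of large numbers for the stationary Gaussian sequence of fBM increments gives $\v_{4,\e}((2/\pi)^{1/4}\B\mid_{[s,t]})\to (2/\pi)\cdot \Ex[Z^4]\cdot(t-s)=\tfrac{6}{\pi}(t-s)$; the discrepancy $|\v_{4,\e}(\calH^f)-\v_{4,\e}((2/\pi)^{1/4}\B)|$ is controlled by the elementary inequality $||X+Y|^4-|X|^4|\lesssim |X|^3|Y|+|Y|^4$ with $|Y|=|R_u-R_{u-\e}|=O(\e^\beta)$, giving a sum of order $\e^{\beta-1/4}+\e^{4\beta-1}\to 0$ in probability.

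For (3) and (4), the standard Gaussian law of the iterated logarithm and modulus of continuity for fBM with Hurst $\tfrac14$ yield
\begin{align*}
\limsup_{\e\downarrow 0}\frac{\B_{t+\e}-\B_t}{\sqrt{2}\,\e^{1/4}\sqrt{\log\log(1/\e)}}=1,\qquad \limsup_{\e\downarrow 0}\frac{\sup_{1\le s<t\le 2,\,|t-s|\le\e}|\B_t-\B_s|}{\sqrt{2}\,\e^{1/4}\sqrt{\log(1/\e)}}=1
\end{align*}
almost surely. Multiplying by $(2/\pi)^{1/4}$ produces the claimed constant via $\sqrt{2}\,(2/\pi)^{1/4}=(8/\pi)^{1/4}$, and the H\"older remainder contributes only $O(\e^{\beta-1/4}/\sqrt{\log\log(1/\e)})\to 0$, which does not perturb the $\limsup$.

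For (5), the analogue of the Orey-Taylor fast-points theorem for fBM of Hurst $\tfrac14$ (in the form of Kahane, or Meerschaert-Wang-Xiao) gives
\begin{align*}
\dim_H\Big\{t\in[1,2]:\limsup_{\e\downarrow 0}\frac{|\B_{t+\e}-\B_t|}{\e^{1/4}\sqrt{\log(1/\e)}}\ge \alpha\sqrt{2}\Big\}=1-\alpha^2\quad\text{a.s.},
\end{align*}
and since the H\"older remainder again contributes a pointwise-negligible additive perturbation, the set $E(\alpha)$ for $\calH^f$ equals the corresponding set for $(2/\pi)^{1/4}\B$ up to a null set and therefore shares its Hausdorff dimension. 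I expect (5) to be the main technical step: either one locates a published Orey-Taylor analogue for fBM with $H=\tfrac14$ with matching normalizing constants, or one reproves the two bounds on $\dim_H E(\alpha)$ from scratch (the upper bound via a covering argument with Borel-Cantelli, the lower bound via a second-moment/tree argument on a limsup random set), in each case verifying that the coupling furnished by Theorem~\ref{t:main} transports the fast-point set intact.
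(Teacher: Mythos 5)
Your proposal is correct and follows essentially the same route as the paper: invoke Theorem \ref{t:main} to couple $\calH_t^f$ with $(2/\pi)^{1/4}\B_t$ up to a $\beta$-H\"older remainder with $\beta>\tfrac14$, observe that this remainder is negligible at scale $\e^{1/4}$, and transfer each of the five statements from the corresponding known result for fBM with Hurst parameter $\tfrac14$ (the paper cites the quartic variation, LIL, modulus of continuity, and fast-points/Hausdorff-dimension results from the literature exactly as you anticipate). Your constant bookkeeping, e.g.\ $\sqrt{2}\,(2/\pi)^{1/4}=(8/\pi)^{1/4}$ and the $O(\e^{3/2})$ decay of the normalized off-diagonal covariances, matches the paper's.
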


\subsection{Proof Idea}\label{sec:pfidea} In this section, we illustrate the main ideas that go behind the proof of Theorem \ref{t:main}. The proof strategy is to first establish a similar temporal approximation at the level of the Stochastic Heat Equation and then update it to the KPZ equation level using tail estimates for the KPZ equation. Our approximation for the \ac{SHE} holds for a wider class of initial data, namely the $\hypz$ class (see Definition \ref{hyp} for details). \\

Let us consider \ac{SHE} $\calZ^f(t,x)$ started from initial data $f$ drawn from $\hypz$ class (see Definition \ref{hyp}). We first study the temporal differences of the \ac{SHE}. Using \eqref{fkdc} for each $t,\e>0$, one has
\begin{equation}\label{e:ztemp}
	\begin{aligned}
		\calZ^f(t+\e,0)-\calZ^f(t,0) & = \int_{\R} [p_{t+\e}(y)-p_t(y)]e^{f(y)}\d y \\ & \hspace{1cm}+\int_{t}^{t+\e}\int_{\R} p_{t+\e-s}(y)\calZ^f(s,y)\xi(\d s,\d y) \\ & \hspace{2cm}+\int_{0}^{t}\int_{\R} [p_{t+\e-s}(y)-p_{t-s}(y)]\calZ^f(s,y)\xi(\d s,\d y).
	\end{aligned}
\end{equation}
Assuming $\e$ is small enough, we seek to find a good approximation for the r.h.s.~of above equation. We now give a heuristic idea of how to arrive at a good approximation. We remark that the following approach was outlined in \cite{davar}. \\

For the class of initial data considered here, the first term on the r.h.s.~of \eqref{e:ztemp} can be shown to be of the order $O(\e)$ which turns out to be much more regular than the other two terms. Thus this term can be ignored.

Let us consider the third term in r.h.s.~of \eqref{e:ztemp}. Note that the heat kernel $p_t(x)$ is singular near $t=0$ and for small enough $t$,  $p_t(y)\d y \approx \delta_0(\d y)$. As long as $\calZ^f(s,y)$ is nice enough, one expects $$\int_{0}^{t'}\int_{\R} [p_{t+\e-s}(y)-p_{t-s}(y)]\calZ^f(s,y)\xi(\d s,\d y)$$ is regular enough for $t'<t$. Hence the main contribution in the third term on r.h.s.~of \eqref{e:ztemp} comes when $s$ is close to $t$. For $s$ close to $t$, the essential contribution of $[p_{t+\e-s}(y)-p_{t-s}(y)]$ comes from $y=0$.  Similarly in the second term, as $s$ is already close to $t$, again the contribution of $p_{t+\e-s}(y)$ comes when $y$ is close to zero. Thus naively we may replace $\calZ^f(s,y)$ by $\calZ^f(t,0)$ in the second and third terms. We then expect the quantity on the r.h.s.~of \eqref{e:ztemp} is close to
\begin{equation*}
	\begin{aligned}
		\calZ^f(t,0) \cdot \left[\int_{0}^{t}\int_{\R} [p_{t+\e-s}(y)-p_{t-s}(y)]\xi(\d s,\d y)+\int_{t}^{t+\e}\int_{\R} p_{t+\e-s}(y)\xi(\d s,\d y)\right],
	\end{aligned}
\end{equation*} 
which is same as
\begin{equation*}
	\begin{aligned}
		\calZ^f(t,0) \cdot \left[\mathcal{V}_{t+\e}(0)-\mathcal{V}_t(0)\right],
	\end{aligned}
\end{equation*}
where
\begin{align*}
	\calV_t(x):=\int_{(0,t)\times \R} p_{t-s}(y-x)\xi(\d s,\d y)
\end{align*}
with the same underlying space-time white noise $\xi(t,x)$.  $\calV_t(x)$ solves the linear stochastic heat equation $\partial_t\calV=\frac12\partial_{xx}\calV+\xi$ with $\calV_0(x)\equiv 0$. By Proposition 2.2 in \cite{davar} ($\alpha=2$ in their statement), one can find a fractional Brownian motion $\B_t$ with Hurst parameter $\frac14$ and another process $R_t$ on the same probability space as $\calV_t$ such that  $$\calV_t=(2/\pi)^{\frac14}\B_t+R_t.$$ 
Finally, the process $R_t$ is known to be much more regular and can be ignored. Thus the above approach leads to the following approximation theorem. 

\begin{theorem}\label{t:comp} Fix $\mathfrak{q}=(\mathfrak{q}_k)_{k=1}^{\infty} \in ((0,\infty)\times (0,1])^{\mathbb{N}}$ and fix any $f\in \hypz$ defined in Definition \ref{hyp}. Consider the \ac{SHE} defined in \eqref{she} with initial data $e^f$. Fix $T>1$ and $k\ge 2$ and $\beta\in (0,\frac25)$. There exists a fractional Brownian motion $\B_t$ with Hurst parameter $\frac14$ on the same probability space and constants $\Con=\Con(\beta,\mathfrak{q}_k,k,T)>0$ and $\e_0=\e_0(\beta,\mathfrak{q}_k,k,T)>0$ such that for all $\e\in (0,\e_0)$ we have
	\begin{align}\label{mombd}
		\sup_{t\in [T^{-1},T]}\Ex\left[|\calZ_{t+\e}^f-\calZ_t^f-(2/\pi)^{1/4}\calZ_t^f(\B_{t+\e}-\B_t)|^k\right] \le \Con\e^{k\beta}.
	\end{align}
\end{theorem}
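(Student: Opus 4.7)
The proof follows the heuristic laid out in the Proof Idea section. Starting from the mild decomposition \eqref{e:ztemp}, the plan is to first establish the approximation
$$\calZ_{t+\e}^f - \calZ_t^f \approx \calZ_t^f\cdot\big[\calV_{t+\e}(0) - \calV_t(0)\big]$$
in $L^k$, and then invoke Proposition~2.2 of \cite{davar} to replace $\calV_{t+\e}(0) - \calV_t(0)$ by $(2/\pi)^{1/4}[\B_{t+\e} - \B_t]$ modulo a smoother remainder. The deterministic term in \eqref{e:ztemp} is disposed of first: using the growth condition on $f$ inherited from the $\hypz$ class and heat-semigroup smoothing for $t\in [T^{-1},T]$, one obtains $\big|\int_\R [p_{t+\e}(y)-p_t(y)]e^{f(y)}\d y\big| \lesssim \e$, which is absorbed into the target bound.

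The bulk of the work is to control the two substitution errors
\begin{align*}
R_\e^{(1)} &:= \int_t^{t+\e}\!\!\int_\R p_{t+\e-s}(y)\big[\calZ^f(s,y) - \calZ_t^f\big]\,\xi(\d s,\d y),\\
R_\e^{(2)} &:= \int_0^t\!\!\int_\R \big[p_{t+\e-s}(y) - p_{t-s}(y)\big]\big[\calZ^f(s,y) - \calZ_t^f\big]\,\xi(\d s,\d y).
\end{align*}
I would apply Burkholder--Davis--Gundy followed by Minkowski's integral inequality to the $k$-th moment, reducing each $R_\e^{(j)}$ to a deterministic integral against $\|\calZ^f(s,y) - \calZ_t^f\|_k^2$ weighted by the appropriate heat-kernel (or heat-kernel-difference) squared. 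The key quantitative input is a spatio-temporal modulus
$$\big\|\calZ^f(s,y) - \calZ^f(t,0)\big\|_k \lesssim |t-s|^{1/4-\eta} + |y|^{1/2-\eta}$$
for any small $\eta>0$, uniformly over $f\in\hypz$, which should follow from chaos-expansion moment estimates in the vein of \cite{bertini1995stochastic}. Combined with the scale relations $\int_t^{t+\e}\int_\R p_{t+\e-s}(y)^2\d y\d s \asymp \e^{1/2}$ and $\int_0^t\int_\R [p_{t+\e-s}(y)-p_{t-s}(y)]^2\d y\d s \asymp \e^{1/2}$, and a careful balancing of the regularity weights, these give $\Ex|R_\e^{(j)}|^k \lesssim \e^{k\beta}$ for every $\beta<2/5$.

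Finally, Proposition~2.2 of \cite{davar} yields a decomposition $\calV_t(0) = (2/\pi)^{1/4}\B_t + \rho_t$ with $\rho$ locally H\"older-$(1/2-\eta)$ continuous. Multiplying by $\calZ_t^f$, the residual $\calZ_t^f[\rho_{t+\e}-\rho_t]$ is bounded in $L^k$ by $\e^{1/2-\eta}\,\|\calZ_t^f\|_{2k}$ via H\"older's inequality, where the latter norm is uniformly finite on $[T^{-1},T]$ by standard SHE moment bounds. Collecting all contributions delivers the claimed estimate \eqref{mombd}.

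The principal obstacle I anticipate is obtaining the sharp exponent $\beta<2/5$ uniformly over the $\hypz$ class. For initial data with singular behavior at $s=0$ (narrow-wedge-type in particular), $\calZ^f(s,y)$ blows up as $s\downarrow 0$, so the small-$s$ regime of $R_\e^{(2)}$ cannot rely on a pointwise regularity bound for $\calZ^f$; one must instead exploit the cancellation in $p_{t+\e-s}(y)-p_{t-s}(y)$ together with weighted moment bounds that remain integrable as $s\downarrow 0$. Executing this balance between the temporal exponent $1/4$, the spatial exponent $1/2$, and the heat-kernel weights — so as to improve the naive $\e^{1/4}$ bound to $\e^{2/5-\eta}$ — is where the most delicate bookkeeping will be required.
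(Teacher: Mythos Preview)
Your overall architecture is the same as the paper's: reduce to $\calZ_t^f(\calV_{t+\e}-\calV_t)$ via the mild formulation, handle the deterministic term by heat-semigroup smoothing, control the two stochastic substitution errors by BDG plus space--time regularity of $\calZ^f$, and finish with the $\calV_t=(2/\pi)^{1/4}\B_t+R_t$ decomposition from \cite{davar}. The ingredients you cite (spatial H\"older-$\tfrac12$ and temporal H\"older-$\tfrac14$ for $\calZ^f$, uniform moment bounds, and the Lipschitz remainder $R_t$) are exactly what the paper establishes in its Propositions~\ref{umomf}, \ref{spmomf}, \ref{tpmomf}.

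The gap is precisely the one you flag in your last paragraph: you do not yet have the mechanism that turns the naive $\e^{1/4}$ into $\e^{2/5-}$. Plugging the regularity bound $\|\calZ^f(s,y)-\calZ^f(t,0)\|_k\lesssim |t-s|^{1/4-\eta}+|y|^{1/2-\eta}$ into $R_\e^{(2)}$ over all of $(0,t)$ does not work: the bound is only valid for $s$ bounded away from $0$ and $|y|$ small, and even where it is valid the resulting integral against $[p_{t+\e-s}(y)-p_{t-s}(y)]^2$ does not produce anything better than $\e^{1/2}$ at the level of the squared $L^k$-norm. The paper's device is to introduce an intermediate scale: split $R_\e^{(2)}$ at $s=t-\e^a$ for a parameter $a\in(\tfrac12,1)$. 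On $(t-\e^a,t)$ one uses the regularity estimates (together with the crude bound $[p_{t+\e-s}-p_{t-s}]^2\le p_{t+\e-s}^2+p_{t-s}^2$) and obtains order $\e^{a(\frac12+\theta)}$, where $\theta<\tfrac12$ is the spatial H\"older exponent. On $(0,t-\e^a]$ one abandons regularity entirely, uses only the uniform moment bound, and extracts smallness purely from the heat-kernel cancellation; a direct computation (the paper's Lemma~\ref{heat2}) gives order $\e^{2-\frac{3a}{2}}$. Equating the two exponents by choosing $a=\tfrac{2}{2+\theta}$ yields the common value $\tfrac{1+2\theta}{4+2\theta}$, which sweeps out $(0,\tfrac25)$ as $\theta\to\tfrac12^-$. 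This splitting-and-optimization step is the actual content behind the exponent $\tfrac25$, and it is what your proposal is missing.

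A smaller point: the remainder $R_t$ from \cite{davar} is in fact Lipschitz in $L^k$ (i.e.\ $\Ex|R_{t+\e}-R_t|^k\le C\e^k$), not merely H\"older-$(\tfrac12-\eta)$; your weaker statement is harmless here but undersells the estimate.
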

We mention that this result appears in \cite{davar} for flat initial data $\calZ(0,x)\equiv \mbox{Const}$. Once Theorem \ref{t:comp} is established, a similar approximation theorem for the KPZ equation is established using sharp tail estimates for the KPZ equation that holds for general initial data. These sharp tail estimates are proven in \cite{kpzgen} and holds for all initial data in $\hypz$ class.

To make the above argument rigorous, one requires uniform moment estimates as well as spatial and temporal regularity moment estimates for the SHE. Such estimates are well known for flat initial data (Lemma 3.1 in \cite{davar}). One of the key technical achievements of this paper  is to extend these moment estimates to general initial data. Given any $T>1$,  we are interested in estimating moments of three types:

\begin{itemize}
	\itemsep0.7em
	\item $\Ex[|\calZ_t^f(y)|^k]$ for all $t\in (0,T]$ and $y\in \R.$
	\item $\Ex[|\calZ_t^f(y)-\calZ_t^f(0)|^k]$ for all $t\in [T^{-1},T]$ and $|y|$ small enough.
	\item $\Ex[|\calZ_s^f(0)-\calZ_t^f(0)|^k]$ for all $s,t\in [T^{-1},T]$.
\end{itemize}

For the first point, we show that the logarithm of the moments is of the form $(1+|y|)^{2-\delta}$.  For the next two points, we produce regularity estimates. We show that the expressions in the second and the third points above are bounded by $|y|^{k\theta}$ (for any $\theta\in (0,\frac12)$) and $|s-t|^{k/4}$ respectively. 

We first prove the uniform moment estimate and spatial regularity estimate for narrow wedge data using the moment estimates and tail estimates from \cite{kpzgen} and \cite{kpztime} respectively. We then upgrade it to general initial data via the \textit{convolution formula} that relates the spatial process of $\calZ^{f}(t,\cdot)$ with that of $\calZ^{\nw,y}(t,\cdot)$ started with initial data $\delta_y(x)$.

\begin{proposition}[Convolution formula]\label{p:conv} For any functional initial data in $\hypz$, one has
	$$\calZ_t^f(x)=\int_{\R} \calZ_t^{\nw,y}(x)e^{f(y)}\d y,$$
	where $\calZ_t^{\nw,y}(x)$ denotes the \ac{SHE} started with initial $\delta_{y}(x)$. Furthermore for each fixed $y$ and $t>0$, 
	\begin{align}\label{idenp}
		\calZ_t^{\nw,y}(x)\stackrel{d}{=}\calZ_t^{\nw}(x-y)
	\end{align} 
	as processes in $x$.
\end{proposition}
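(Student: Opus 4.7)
The plan is to prove the two statements separately using the mild (Duhamel--Walsh) formulation \eqref{fkdc} of the SHE and standard properties of space-time white noise.

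For the distributional identity $\calZ_t^{\nw,y}(x) \stackrel{d}{=} \calZ_t^{\nw}(x-y)$, I would exploit translation invariance of $\xi$. Define the translated noise $\til\xi(\d s, \d z) := \xi(\d s, \d(z+y))$, which has the same law as $\xi$. Starting from the mild equation for $\calZ_t^{\nw,y}$ and performing the change of variables $z \mapsto z + y$ inside the stochastic integral, the shifted process $W_t(x) := \calZ_t^{\nw,y}(x+y)$ is seen to satisfy
\begin{align*}
W_t(x) = p_t(x) + \int_0^t \int_{\R} p_{t-s}(x-z) W_s(z)\, \til\xi(\d s,\d z),
\end{align*}
i.e., it is a mild solution of SHE with delta initial data driven by $\til\xi$. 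By uniqueness of the mild solution and the equality in law $\til\xi \stackrel{d}{=} \xi$, this forces $W_t(\cdot) \stackrel{d}{=} \calZ_t^{\nw}(\cdot)$ as processes in $x$, which is precisely the claim after relabeling.

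For the convolution formula, I would define the candidate
\begin{align*}
\til{\calZ}_t(x) := \int_{\R} \calZ_t^{\nw,y}(x) e^{f(y)}\, \d y
\end{align*}
and show that $\til{\calZ}$ satisfies the same mild equation \eqref{fkdc} as $\calZ^f$ with initial data $e^f$. Substituting the mild formula for $\calZ_t^{\nw,y}(x)$ and applying a stochastic Fubini theorem to interchange the $y$-integral with the space-time stochastic integral yields
\begin{align*}
\til{\calZ}_t(x) = \int_{\R} p_t(x-y) e^{f(y)}\, \d y + \int_0^t \int_{\R} p_{t-s}(x-z) \til{\calZ}_s(z)\, \xi(\d s,\d z).
\end{align*}
Since both $\til{\calZ}_t(x)$ and $\calZ_t^f(x)$ solve the same mild equation, uniqueness of mild solutions of SHE gives $\til{\calZ}_t(x) = \calZ_t^f(x)$ almost surely.

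The main technical obstacle is justifying the stochastic Fubini step. One needs a uniform $L^2$-type bound on the kernel $(y, s, z) \mapsto p_{t-s}(x-z) \calZ_s^{\nw,y}(z) e^{f(y)}$ in the product measure. For this, I would combine the sub-quadratic growth condition $f(y) \le \lambda(1 + |y|^{2-\delta})$ from the $\hypz$ class with second-moment estimates $\Ex[\calZ_s^{\nw,y}(z)^2]$ for the narrow-wedge solution (which carry a Gaussian factor $p_{s/2}(z-y)^2$ up to a bounded multiplicative factor, as developed in the literature on SHE). The Gaussian decay in $|y-z|$ from the heat kernels $p_{t-s}(x-z)$ and from $\calZ_s^{\nw,y}(z)$ easily dominates the sub-quadratic growth $e^{f(y)} \le e^{\lambda(1 + |y|^{2-\delta})}$, yielding the required finiteness and legitimizing Fubini. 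The lower-bound condition in the definition of $\hypz$ plays no role in this step but is needed downstream to guarantee positivity of $\til{\calZ}_t(x)$ so that $\calH^f = \log \calZ^f$ is well-defined.
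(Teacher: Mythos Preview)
Your argument is correct, but the route differs from the paper's. The paper does not give a proof; it simply notes that the convolution identity is ``immediate from the chaos expansion for the SHE'' and skips it. In that approach one writes $\calZ_t^f(x)=\sum_{n\ge 0} I_n^f(t,x)$ and $\calZ_t^{\nw,y}(x)=\sum_{n\ge 0} I_n^{\nw,y}(t,x)$, observes that each chaos kernel for general $e^f$ is obtained from the narrow-wedge one by integrating the starting point against $e^{f(y)}\,\d y$, and interchanges the $y$-integral with the $n$-fold Wiener integral term by term; linearity of the Wiener chaos makes this essentially a one-line computation once the $L^2$ bounds on the chaos kernels are in place. Your approach via the mild formulation, stochastic Fubini, and uniqueness of mild solutions is equally standard and arguably more self-contained, since it only requires a single $L^2$ bound rather than control of each chaos level. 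One small slip: you refer to a ``lower-bound condition in the definition of $\hypz$,'' but $\hypz$ (Definition~\ref{hyp}) imposes only the sub-quadratic moment growth \eqref{mkcond}; the lower bound is a feature of $\hyph$. This does not affect the argument, since indeed only the upper bound is used for Fubini.
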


The proof of the above proposition is immediate from the chaos expansion for the \ac{SHE} and hence skipped. {Owing to Proposition \ref{p:conv}, we see that the spatial process $\calZ_t^f(\cdot)$ is equal in distribution to $\int_{\R} \calZ_t^{\nw}(\cdot-y)e^{f(y)}\d y$.  The latter quantity is a function of $\calZ_t^{\nw}$ and the initial data $f$ only. Thus, inserting spatial regularity estimates for $\calZ_t^{\nw}$ in the second term leads to spatial regularity estimates for $\calZ_t^f$ (see Section \ref{sec:spmom} for details)}. 

  {Although the temporal difference $\calZ_s^f-\calZ_t^f$ can be written as $$\int_{\R} (\calZ_s^{\nw,y}(0)-\calZ_t^{\nw,y}(0))e^{f(y)}\d y,$$ the quantity $\calZ_s^{\nw,y}(0)-\calZ_t^{\nw,y}(0)$ can not be viewed as a function of a \textit{single} narrow wedge solution of SHE (as \eqref{idenp} holds for each fixed $t>0$ only). Thus the convolution formula alone} is not very helpful in obtaining temporal regularity estimates. To derive the temporal regularity estimate, we rely on the chaos expansion instead and use the uniform moment estimates for general initial data.

\subsection{Previous Works} Our main results about the KPZ temporal process fit into the broader endeavor of understanding the local structure and multifractal properties of stochastic systems. Studying fractal properties of stochastic partial differential equations (SPDEs) is an active area of research for the last five-six decades. The main focus of a vast majority of such works resided in the study of the large peaks of the SPDEs with multiplicative noise \cite{GM90,CM94,bertini1995stochastic,HHNT,FK09,CJKS,CD15,BC16, Ch17,CHN19}. The growth of the large peaks is also connected to the intermittency phenomenon of SPDEs. We refer to \cite{bertini1995stochastic, CM94, khoshnevisan2014analysis} for a detailed discussion. 

Local structure of the \ac{SHE} and the \ac{KPZ} were explored in  \cite{davar,foondun,hk}. In \cite{davar}, the authors considered a generalization of the \ac{SHE} in \eqref{she}: 
\begin{align}\label{genshe}
	\partial_t\calZ=\tfrac12\Delta_{\alpha/2}\calZ+\sigma(\calZ) \xi, \qquad \calZ(0,x)\equiv 1,
\end{align}
where $\sigma$ is any continuous Lipschitz function and $\alpha\in (1,2]$. Here $\Delta_{\alpha/2}$ denotes the fractional Laplace operator acting on the spatial variable and it is defined as $$\hat{(\Delta_{\alpha/2}\psi)}(\xi):=-|\xi|^{\alpha}\hat{\psi}(\xi).$$ It was shown that locally $t\mapsto \calZ(t,x)$ looks like a fixed multiple of fractional Brownian motion of Hurst parameter $H=\frac12-\frac1{2\alpha}.$ 	The approximation in \cite{davar} is strong enough to deduce several fractal properties $\calZ$. For $\alpha=2$, \cite{hk} studied the short-time peaks of $t\mapsto \calZ(t,x)$. In particular, they studied the Hausdorff and Box dimension of the set
$$\mathcal{U}_c:=\left\{x\in [0,1] \mid \limsup_{\e\downarrow 0} \frac{\calZ(\e,x)-1}{\e^{\frac14}\sqrt{\log(1/\e)}}\ge c\right\},$$
and concluded that high local oscillations of $(t,x)\mapsto \calZ(t,x)$ are multifractal. A similar study was conducted for the spatial increments of the random function the solution of \eqref{genshe} in \cite{foondun}. One of the corollaries of their result is that the spatial process of the KPZ equation for flat initial data is locally Brownian, thus complementing the works of \cite{hairer2013solving,quastel2011local}.  

Long-time variants of the above results were conjectured in the physics literature decades ago and were established rigorously in \cite{KKX17,KKX18}.  \cite{KKX17} investigated the fractal properties of the \ac{SHE} started from the flat initial data and established the multifractal nature of the spatial process of the KPZ equation.  
The same set of authors in a later paper \cite{KKX18} showed that there are infinitely many different stretch scale (in the spatial direction) and time scale such that for any given stretch and time scale, the peaks of the spatio-temporal process of the stochastic heat equation attain non-trivial macroscopic Hausdorff dimensions.  Fractal properties of the peaks of the KPZ temporal process under $1:2:3$ scaling were investigated in \cite{dg} and more recently the gaps of the tall peaks of \ac{SHE} were addressed in \cite{gy21}.

We close this section by reviewing some of the recent works related to the fractal geometry of the KPZ fixed point. The KPZ equation is one of the prototypical models in the KPZ universality class
which contains a large collection of random growth models that are believed to exhibit certain common features such as a universal scaling exponent of $1/3$ and certain universal non-Gaussian large-time fluctuations. We refer to \cite{corwin2012kardar} for an excellent survey on this topic. All such models in the KPZ universality class are conjectured to converge to a universal scaling limit, the KPZ fixed point $\h(t,x)$ constructed in \cite{dov,mqr}. Recently \cite{qs20, vir20} announced proofs of convergence of the KPZ equation (with narrow wedge initial data) under $1:2:3$ scaling to the KPZ fixed point $\h(t,x)$ (with narrow wedge initial data).  Fractal geometry of the KPZ fixed point related to geodesic geometry was studied in \cite{bgh1,bgh2,bg21} (see also \cite{g21} for a survey)  and related to fixed point maximizers were studied in \cite{chhm21,d22}. In the context of local structure, it was recently shown in \cite{dauvergne2020three} that KPZ fixed point has cubic variation in a similar spirit as Theorem \ref{t:cor} (2). The questions about short time and long time law of iterated logarithms for KPZ fixed point are investigated in \cite{dgl}. We mention that KPZ fixed point is not known to be associated with any SPDE and hence the ideas to prove any fractal property for the fixed point are quite different than the ones presented here.

\subsection*{Organization} The rest of the paper is organized as follows. In Section \ref{sec:mom}, we prove several moment estimates for \ac{SHE}. Based on it, we prove Theorem \ref{t:comp} in Section \ref{sec:fbm}. Proof of Theorems \ref{t:main} and \ref{t:cor} are then completed in Section \ref{sec:pf}.   

\subsection*{Acknowledgements} We thank Ivan Corwin for numerous discussions, encouragement, and comments on an earlier draft of the paper. We thank Promit Ghosal, and Kevin Yang for several useful discussions. We thank Weitao Zhu for feedback and {Yier Lin for pointing out a mistake in an earlier draft of the paper}. We thank the anonymous referees for
their careful reading and useful comments on improving our manuscript. The author acknowledges support from NSF DMS-1928930 during their participation in the program ``Universality and Integrability in Random Matrix Theory and Interacting Particle Systems'' hosted by the Mathematical Sciences Research Institute in Berkeley, California during the Fall semester of 2021.


\section{Moment estimates for Stochastic Heat Equation} \label{sec:mom}

Before commencing with the analysis, let us mention a few pieces of notation that we will  use throughout this paper. {Given a function (possibly random) $f:\R \to \R\cup \{-\infty\}$, we use $\calZ^f(t,x)$ or $\calZ_t^f(x)$ to denote the solution to the \ac{SHE} started from the initial data $e^f$. For narrow wedge initial data (i.e., $\delta_0(x)$ initial data for the \ac{SHE}), we denote it as $\calZ^{\nw}(t,x)$ or $\calZ^{\nw}_t(x)$}. When $x=0$, we will often use the shorthand notations $\calZ_t^f:=\calZ_t^{f}(0)$ and $\calZ_t^{\nw}:=\calZ_t^{\nw}(0)$.  We use analogous notations for the KPZ equation as well. We use $\Con = \Con(x, y, z, \ldots) > 0$ to denote a generic deterministic positive finite
constant that may change from line to line, but dependent on the designated variables $x,y,z,\ldots$.

\medskip

In this section, we prove moment estimates for the \ac{SHE} \eqref{she}, for a wide class of initial data which we now introduce. 
\begin{definition}\label{hyp} Fix any $\mathfrak{q}= (\mathfrak{q}_k)_{k=1}^{\infty}=(\lambda_k,\delta_k)_{k=1}^{\infty} \in ((0,\infty)\times (0,1])^{\mathbb{N}}$.   The $\hypz$ class is a class of possible initial data for the KPZ equation consisting of:
	
	\begin{enumerate}
		\item \label{acond1} Narrow wedge initial data corresponding to delta initial data \eqref{deltaic} for \ac{SHE}.
		
		\item \label{bcond1} Consider any measurable (possibly random) function $f:\R \to \R\cup \{-\infty\}$.  For each $k\in \Z_{>0}$, we set \begin{align}\label{mkf}
			{M_k^f(x):=
				\Ex[e^{kf(x)}]},
		\end{align}
{with the convention that $\Ex[e^{k f(x)}]=e^{kf(x)}$ when $f$ is non-random}.	$f$ belongs to the class $\hypz$  if for each $k\in \Z_{>0}$ and for all $x\in \R$ we have
		\begin{align}\label{mkcond}
			\log M_k^f(x) \le \lambda_k(1+|x|^{2-\delta_k}).
		\end{align}
	\end{enumerate}		
\end{definition}
This class of initial data is much larger than the $\hyph$ class introduced in Definition \ref{hyp1}. We record this fact in the following lemma. 

\begin{lemma}\label{gid} Given $\mathfrak{p}\in (0,\infty)^5$, there exists $\mathfrak{q} \in ((0,\infty)\times (0,1])^{\mathbb{N}}$ such that
	$$\hyph \subset \hypz.$$
\end{lemma}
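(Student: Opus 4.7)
The plan is to check the three sub-classes making up $\hyph$ separately and then take a single $\mathfrak{q}$ that dominates all of them.

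First, the narrow wedge case is immediate since narrow wedge is a member of both classes by definition, so it requires no analysis.

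Second, consider the two-sided Brownian initial data $B(x)$. For each $x \in \mathbb{R}$, $B(x)$ is a centered Gaussian with variance $|x|$, so a direct moment generating function computation gives $M_k^B(x) = \Ex[e^{kB(x)}] = \exp(k^2|x|/2)$, hence $\log M_k^B(x) = k^2|x|/2$. Since for any $\delta_k \in (0,1)$ we have the elementary bound $|x| \le 1 + |x|^{2-\delta_k}$ on all of $\mathbb{R}$, we get $\log M_k^B(x) \le (k^2/2)(1 + |x|^{2-\delta_k})$.

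Third, consider any deterministic $f$ satisfying Definition~\ref{hyp1}\eqref{ccond}. Condition \eqref{ci} says $f(x) \le \lambda(1+|x|^{2-\delta})$, so
\[
\log M_k^f(x) = kf(x) \le k\lambda(1+|x|^{2-\delta}).
\]
Note that condition \eqref{cii} (the lower bound on a subinterval) plays no role in verifying the growth bound \eqref{mkcond}; it is only used elsewhere in the paper.

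Finally, I would combine the three bounds by choosing, for each $k \in \Z_{>0}$,
\[
\delta_k := \min\!\left(\tfrac{1}{2},\,\delta\right), \qquad \lambda_k := \max\!\left(\tfrac{k^2}{2},\,k\lambda\right).
\]
Since $2-\delta \le 2-\delta_k$, we have $|x|^{2-\delta} \le 1 + |x|^{2-\delta_k}$ for every $x \in \R$, so the deterministic bound becomes $\log M_k^f(x) \le 2k\lambda(1+|x|^{2-\delta_k}) \le 2\lambda_k(1+|x|^{2-\delta_k})$ (after absorbing the factor $2$ into $\lambda_k$ if desired, or simply enlarging $\lambda_k$ by a factor of $2$). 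The Brownian bound likewise is dominated by $\lambda_k(1+|x|^{2-\delta_k})$. Setting $\mathfrak{q} := (\lambda_k,\delta_k)_{k\ge 1}$ then places every element of $\hyph$ inside $\hypz$, completing the argument. There is no real obstacle here, as the claim reduces to elementary envelope bounds; the only point deserving care is that condition \eqref{cii} is irrelevant to the inclusion and the two-sided Brownian case must be checked with an explicit Gaussian moment generating function rather than a pointwise bound on $B$.
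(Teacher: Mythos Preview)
Your proof is correct and follows essentially the same approach as the paper: both compute the Gaussian moment generating function for the Brownian case, use condition \eqref{ci} for the deterministic case, and then pick $\delta_k$ as a minimum and $\lambda_k$ as a maximum to cover all cases simultaneously (the paper takes $\delta_k=\min(1,\delta)$ and $\lambda_k=\max\{2k\lambda,k^2/2\}$, essentially your choice up to harmless constants). Your remarks that narrow wedge is in both classes by definition and that condition \eqref{cii} is irrelevant here are also in line with the paper's treatment.
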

\begin{proof}
	Fix any $\mathfrak{p}=(\theta,\delta,\lambda,\kappa,M)\in (0,\infty)^5$. We set $\lambda_k=\max\{2k\lambda,\frac{k^2}2\}$, $\delta_k=\min(1,\delta)$. Consider $\mathfrak{q}:=(\lambda_k,\delta_k)_{k=1}^{\infty}\in ((0,\infty)\times (0,1])^{\mathbb{N}}$. Clearly for any deterministic function $f\in \hyph$, following the condition (3) in Definition \ref{hyp1} and \eqref{mkf}, we have $\log M_k^f(x)=kf(x) \le k\lambda(1+|x|)^{2-\delta}$. We have
 $$(1+|x|)^{2-\delta} \le (1+|x|)^{\max(1,2-\delta)} \le 2\cdot[1+|x|^{2-\delta_k}].$$
Thus $f\in \hypz$. For Brownian initial data as $\Ex[e^{kB(x)}]=e^{k^2|x|/2}$, we have ${\log} M_k^f(x) \le \frac12k^2|x| \le \lambda_k(1+|x|^{2-\delta_k})$. Hence $\hyph \subset \hypz$.
\end{proof}	

The rest of this section is devoted to proving several kinds of moment estimates for the \ac{SHE}. In Section \ref{sec:umom}, we establish uniform moment estimates in Proposition \ref{umomf}.  In Section \ref{sec:spmom} and Section \ref{sec:tpmom}, we produce spatial and temporal regularity estimates in Proposition \ref{spmomf} and Proposition \ref{tpmomf} respectively.  For Proposition \ref{umomf} and Proposition \ref{spmomf}, we first prove our estimates for the narrow wedge case in Propositions \ref{umom} and \ref{spmom} respectively. Then we upgrade it to general initial data by convolution formula (Proposition \ref{p:conv}). 

\subsection{Uniform moment estimates} \label{sec:umom} In this subsection we provide growth estimates for the moments of the \ac{SHE} started from $f\in \hypz$.

\begin{proposition}\label{umomf} Fix $\mathfrak{q}\in ((0,\infty)\times (0,1])^{\mathbb{N}}$, $T>0$ and $k\ge 2$. There exists a constant $\Delta=\Delta(\mathfrak{q}_k,k,T)>0$ such that for all $s\in [0,T]$, for all functional $f\in \hypz$ and $y\in \R$ we have 
	\begin{align}\label{umomfm}
		\Ex\left[[\calZ^{f}_s(y)]^k\right] \le \Delta e^{\Delta |y|^{2-\delta_k}}.
	\end{align}
\end{proposition}

Note that the above proposition does not cover the narrow wedge initial data case. For that initial data we have the following moment estimate.
\begin{customprop}{\ref{umomf}-nw}
	\label{umom} Fix $T>0$ and $k\in \Z_{>0}$. Consider the \ac{SHE} $\calZ^{\nw}_t(x)$ started from delta initial data \eqref{deltaic}. There exists a constant $\Con=\Con(k,T)>0$ such that for all $s\in (0,T]$ and $y\in \R$ we have 
	\begin{align}\label{nwmom}
		\Ex\left[ (2\pi s)^{k/2}[\calZ^{\nw}_s(y)]^k\exp\left(\tfrac{ky^2}{2s}\right)\right]\le \Con<\infty.
	\end{align}
\end{customprop}
\begin{proof} By Proposition 1.4 in \cite{acq} we have $\calZ^{\nw}(s,y)e^{\frac{y^2}{2s}}\stackrel{d}{=} \calZ^{\nw}(s,0)$. {It thus suffices to prove
\begin{align}\label{ditto}
		\sup_{s\in (0,T]} (2\pi s)^{k/2}\Ex\left[ [\calZ^{\nw}_s(0)]^k\right]\le \Con<\infty.
	\end{align}
Precise moment estimates for $\calZ^{\nw}_s(0)$ are available in the literature; see Lemma 4.1 in \cite{kpzgen} for example. Indeed, Lemma 4.1 in \cite{kpzgen} shows that there exists $\Con=\Con(k,T)>0$ such that $\Ex\left[ [\calZ^{\nw}_s(0)]^k\right] \le \Con$ for $s\in [\pi,T]$ and $\Ex\left[ [\calZ^{\nw}_s(0)]^k\right] \le \Con s^{-k/2}$ for $s \in (0,\pi]$. These two estimates together verifies \eqref{ditto}.}
\end{proof}

In order to extend our moment estimate from narrow wedge initial data to functional initial data we rely on the convolution formula (Proposition \ref{p:conv}).

\begin{proof}[Proof of Proposition \ref{umomf}] {To get the moment bounds for general initial data, we use  Proposition \ref{p:conv} and a clever H\"older trick inspired from \cite{yier} (see Eq (2.2)).  Using Proposition \ref{p:conv} we have
\begin{equation}
    \label{f11}
    \begin{aligned}
			\Ex[\calZ_s^f(y)^k] & = \Ex\left[\left(\int_{\R} \calZ_s^{\nw,z}(y)e^{f(z)} \d z \right)^k \right] \\ & =\Ex\left[\left(\int_{\R} e^{-\frac{(y-z)^2}{4s}}e^{\frac{(y-z)^2}{4s}}\calZ_s^{\nw,z}(y)e^{f(z)} \d z \right)^k \right].
	\end{aligned}
\end{equation}		
  We now apply H\"older inequality to the terms $e^{-\frac{(y-z)^2}{4s}}$  and $e^{\frac{(y-z)^2}{4s}}\calZ_s^{\nw,z}(y)e^{f(z)}$ with H\"older exponent $k$ and $\ell=\frac{k}{k-1}$ to get
		\begin{align}
			\nonumber
			\Ex[\calZ_s^f(y)^k] & \le \left(\int_{\R} e^{-\frac{\ell(y-z)^2}{4s}} dz \right)^{\frac{k}\ell}\Ex\left[\int_{\R} e^{\frac{k(y-z)^2}{4s}}[\calZ_s^{\nw,z}(y)]^k e^{kf(z)} \d z \right] \\ \label{peneq} & = \left(\int_{\R} e^{-\frac{\ell(y-z)^2}{4s}} dz \right)^{k-1}\int_{\R} e^{\frac{k(y-z)^2}{4s}}M_k^f(z)\Ex[\calZ_s^{\nw,z}(y)^k] \d z \\  & = \left(\tfrac{4s\pi}{\ell} \right)^{\frac{k-1}2}\int_{\R} e^{\frac{k(y-z)^2}{4s}}M_k^f(z)\Ex[\calZ_s^{\nw,z}(y)^k] \d z \label{f1}
		\end{align}
		where $M_k^f(x)$ is defined in \eqref{mkf}.} {Note that the equality in \eqref{peneq} follows by interchanging the expectation and integral (as the integrand is nonnegative) and utilizing the fact that the initial data $f$ is independent of the underlying space-time white noise}. Using \eqref{idenp} and applying Proposition \ref{umom} we have
	\begin{align}
		\mbox{r.h.s.~of \eqref{f1}} & \le \Con\tfrac1{(2\pi s)^{1/2}}\int_{\R} M_k^f(z)\exp\left({-\tfrac{k(y-z)^2}{4s}}\right)\d z \label{3.4.1}
	\end{align}
	where $\Con>0$ depends on $k,T$.  Using assumption \eqref{mkcond} followed by a change of variable $u\mapsto \sqrt{k}(z-y)/\sqrt{2s}$ we have
	\begin{align*}
		\mbox{r.h.s.~of \eqref{3.4.1}} 
		& = \Con\tfrac1{(2\pi s)^{1/2}}\sqrt{\tfrac{2s}{k}}e^{\lambda_k}\int_{\R} \exp\left(\lambda_k|u\sqrt{\tfrac{2 s}{k}}+{y}|^{2-\delta_k}-\tfrac{u^2}{2}\right)\d u  \\ & \le \Con e^{\lambda_k} e^{\Delta |y|^{2-\delta_k}}\int_{\R} \exp\left(\Delta|u|^{2-\delta_k}-\tfrac{u^2}{2}\right)\d u,
	\end{align*}
	where in the last line we used the fact that one can choose $\Delta>0$ depending on $\lambda_k$ and $T$ such that $\lambda_k |u\sqrt{\frac{2 s}{k}}+y|^{2-\delta_k} \le \Delta (|u|^{2-\delta_k}+|y|^{2-\delta_k})$ for all $s\in [0,T]$ and for all $u,y\in \R$.
	Clearly the above integral is uniformly bounded by a constant depending on $\Delta$ and $\delta_k$. Thus adjusting the constant $\Delta$ we get the desired bound.
\end{proof}

\subsection{Spatial Regularity estimate}\label{sec:spmom} In this subsection we prove regularity estimates for the moments of $|\calZ_t^f(x)-\calZ_t^f(0)|$. As before we rely on the convolution formula to bypass narrow wedge results to functional data. For proving regularity estimates for narrow wedge data, we use tail estimates for differences of height functions in the scaled KPZ equation noted in \cite{kpztime}. Towards this end, for narrow wedge initial data, we introduce the $1:2:3$ scaled version of the KPZ equation:
\begin{align}\label{321}
	\h_t(x):=\frac{\log \calZ_{t}^{\nw}(t^{2/3}x)+\frac{t}{24}}{t^{1/3}}.
\end{align}
The above $1:2:3$ scaling of fluctuations, space, and time are the characteristics of the models in KPZ universality class.  Being an integrable model, $\h_t(x)$ itself enjoys several properties. It admits exact one-point distribution formula and moment formulas \cite{acq}. Furthermore, it can be viewed as the top curve of the KPZ line ensemble that enjoys a type of Brownian Gibbs property \cite{ch16}. Exploiting such a rich structure several fine estimates for $\h_t(x)$ are obtained  \cite{kpzgen}, \cite{kpztime}, and \cite{dg}. We utilize them to obtain moment estimates in our setting.

\begin{proposition}\label{spmomf} Fix $\mathfrak{q}\in ((0,\infty)\times (0,1])^{\mathbb{N}}$, $T>1$, $\theta\in (0,\frac12)$ and $k\ge 2$. There exists a constant  $\Con=\Con(\mathfrak{q}_k,k,T,\theta)>0$ such that for all functional $f\in \hypz$ we have
	\begin{align*}
		\sup_{t\in [T^{-1},T], 0<|x|\le \frac1\Con} \Ex\left[ \frac{|\calZ_t^{f}(x)-\calZ_t^{f}(0)|^k}{|x|^{k\theta}}\right]= \Con<\infty.
	\end{align*}
\end{proposition}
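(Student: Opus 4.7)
The approach parallels the proof of Proposition \ref{umomf}: first establish the spatial regularity estimate for narrow wedge initial data, then upgrade to the general $f\in \hypz$ class via the convolution formula of Proposition \ref{p:conv}.

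For the narrow wedge case, I would express the solution via the $1{:}2{:}3$-scaled height function from \eqref{321}, namely $\calZ_t^{\nw}(y)=\exp(-t/24+t^{1/3}\h_t(yt^{-2/3}))$. Using the elementary bound $|e^a-e^b|\le |a-b|(e^a+e^b)$ followed by the Cauchy--Schwarz inequality, the increment moment splits as
\begin{align*}
\Ex[|\calZ_t^{\nw}(z+x)-\calZ_t^{\nw}(z)|^k] \le \Con\, t^{k/3}\cdot \Ex\bigl[|\h_t((z+x)t^{-2/3})-\h_t(zt^{-2/3})|^{2k}\bigr]^{1/2}\cdot \Ex\bigl[(\calZ_t^{\nw}(z+x)+\calZ_t^{\nw}(z))^{2k}\bigr]^{1/2}.
\end{align*}
The first factor is controlled by the tail estimates for spatial increments of the scaled height function from \cite{kpztime}; after accounting for the rescaling $|(z+x)t^{-2/3}-zt^{-2/3}|=|x|t^{-2/3}$, this yields a bound of order $|x|^{k\theta}$ with at most polynomial dependence on $z$. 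The second factor has Gaussian decay of order $e^{-kz^2/(2t)}$ uniformly for $t\in[T^{-1},T]$ by Proposition \ref{umom}. Combining the two gives the narrow wedge estimate
\begin{align*}
\Ex[|\calZ_t^{\nw}(z+x)-\calZ_t^{\nw}(z)|^k] \le \Con\, |x|^{k\theta}\, W_t(z),
\end{align*}
where $W_t(z)$ enjoys Gaussian decay $e^{-kz^2/(2t)}$ up to polynomial corrections.

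For the upgrade to general initial data, I would mimic the H\"older trick from the proof of Proposition \ref{umomf}. Starting from the convolution formula
\begin{align*}
\calZ_t^f(x)-\calZ_t^f(0) = \int_\R \bigl[\calZ_t^{\nw,y}(x)-\calZ_t^{\nw,y}(0)\bigr]\,e^{f(y)}\, dy,
\end{align*}
I would insert the auxiliary factor $e^{-y^2/(4t)}\cdot e^{y^2/(4t)}$, apply H\"older's inequality with exponents $k$ and $\ell=k/(k-1)$, take expectations, and invoke the identity $\calZ_t^{\nw,y}(x)\stackrel{d}{=}\calZ_t^{\nw}(x-y)$ as a process in $x$ from \eqref{idenp}. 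Substituting the Step 1 bound then yields
\begin{align*}
\Ex[|\calZ_t^f(x)-\calZ_t^f(0)|^k] \le \Con\, |x|^{k\theta}\int_\R e^{ky^2/(4t)}\,M_k^f(y)\,W_t(-y)\, dy.
\end{align*}
The remaining $y$-integral is uniformly bounded for $t\in[T^{-1},T]$ because the effective Gaussian decay $e^{-ky^2/(4t)}$ (obtained by combining $W_t(-y)$ with the H\"older weight $e^{ky^2/(4t)}$) dominates the sub-Gaussian growth $\log M_k^f(y)\le \lambda_k(1+|y|^{2-\delta_k})$ guaranteed by Definition \ref{hyp}.

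The principal obstacle is the narrow wedge step, specifically producing an increment bound that retains a Gaussian-type decay in the base point $z$ strong enough to offset the weight $e^{ky^2/(4t)}$ arising from the H\"older trick of Step 2. The non-stationarity of $\h_t$ caused by its parabolic profile makes the location dependence of the $\h_t$-increment moments delicate, and the balance between the rescaled increment size $|x|t^{-2/3}$ and the rescaled base point $zt^{-2/3}$ must be tracked carefully when invoking the tail estimates from \cite{kpztime}. The restriction $\theta<1/2$ is ultimately dictated by the Brownian-like regularity of the spatial process of $\h_t$ at small scales.
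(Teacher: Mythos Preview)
Your approach is correct and in fact cleaner than the paper's, though the route is genuinely different. The paper first proves a narrow wedge increment bound $\Ex[|\calZ_t^{\nw}(x+y)-\calZ_t^{\nw}(y)|^k]\le \Con|x|^{k/2}$ that is \emph{uniform} in the base point $y$ (Proposition~\ref{spmom}), and separately invokes the Gaussian moment decay $\Ex[\calZ_t^{\nw}(y)^k]\le \Con e^{-ky^2/(2t)}$ from Proposition~\ref{umom}. It then combines these via a $\min$ and splits the convolution integral at the scale $y_1=\sqrt{2T\log(1/x)}$: the regularity bound is used for $|y|\le y_1$ and the Gaussian bound for $|y|>y_1$. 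The sub-polynomial factor $e^{\lambda_k|y_1|^{2-\delta_k}}$ coming from $M_k^f$ on the inner region is what degrades $|x|^{k/2}$ to $|x|^{k\theta}$.

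Your Cauchy--Schwarz step at the narrow wedge level is the key difference: by splitting into an $\h_t$-increment factor (controlled by Theorem~1.3 of \cite{kpztime}, with only polynomial dependence on the base point through the parabolic shift $\frac{(u+v)^2-v^2}{2}$) and a $2k$-th moment factor (carrying the full Gaussian decay $e^{-kz^2/(2t)}$), you obtain a bound that retains \emph{both} the $|x|^{k/2}$ regularity and the Gaussian localization simultaneously. After the H\"older trick the net weight $e^{-ky^2/(4t)}$ kills $M_k^f(y)$ directly, with no splitting needed. A pleasant byproduct is that your argument actually delivers the full exponent $|x|^{k/2}$ for general $f\in\hypz$, not merely $|x|^{k\theta}$; the restriction $\theta<\tfrac12$ is an artifact of the paper's min-and-split bookkeeping, not of the underlying regularity.
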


\begin{customprop}{\ref{spmomf}-nw}\label{spmom} Fix $T>1$ and $k\in \Z_{>0}$. There exists a constant $\Con=\Con(k,T)>0$  such that for all $y\in \R$ we have
	\begin{align*}
		\sup_{t\in [T^{-1},T], x\neq 0} \Ex\left[ \frac{|\calZ_t^{\nw}(x+y)-\calZ_t^{\nw}(y)|^k}{|x|^{k/2}}\right]= \Con<\infty.
	\end{align*}
\end{customprop}

We use tail estimates of $\h_t(x)$ and its spatial or temporal increments and convert them to moment estimates for $\calZ_t^{\nw}(y)$ via the following Lemma.

\begin{lemma}\label{lhz2} Let $X : \R\times\R \to \R$ be a  stochastic process. Fix $M>1$ and let $I$ be an interval of $\R$. Suppose for all $s,t\in I$ with $|t-s|< 1$ and for all $m>0$ we have
	\begin{align}\label{htail}
		\Pr\left(|X(s,t)|\ge m|s-t|^{\beta}\right) \le \Con\exp\left(-\tfrac1\Con m^{\alpha}\right)
	\end{align}
	for some constant $\Con>0$, $\beta>0$ and $\alpha>1$.	Then for all $k\in \Z_{>0}$ we have
	\begin{align}\label{htoz}
		\sup_{p\in [M^{-1},M]}\sup_{\substack{s\neq t \in I\\ |t-s|\le 1}} \Ex\left[\frac{\left|e^{pX(s,t)}-1\right|^k}{|s-t|^{k\beta}}\right] =R(\alpha,\beta,\Con,k,M) <\infty.
	\end{align}
\end{lemma}
We first complete the proof of Proposition \ref{spmom} assuming Lemma \ref{lhz2}. 

\begin{proof}[Proof of Proposition \ref{spmom}] Fix $T>1$, $t\in [T^{-1},T]$ and $x,y\in \R$ with $x\neq 0$. By Proposition \ref{umom}, Proposition \ref{spmom} follows for all $x$ bounded away from $0$. Thus we may assume $|x|\le T^{-2/3}$ and set $u= t^{-2/3}x$ and $v=t^{-2/3}y$. As $t\in [T^{-1},T]$, we have $|u|\le T^{2/3}|x|\le 1$. {Using the scaling in \eqref{321}, we have the following relation:
\begin{align*}
    \frac{\calZ_t^{\nw}(x+y)e^{\frac{(x+y)^2}{2t}}}{\calZ_t^{\nw}(y)e^{\frac{y^2}{2t}}}=\exp\bigg(t^{1/3}\big(\h_t(u+v)+\tfrac{(u+v)^2}{2}-\h_t(v)-\tfrac{v^2}{2}\big)\bigg).
\end{align*}
We introduce the parabola in the above equation as $\h_t(w)+\frac{w^2}{2}$ is stationary in $w$ and one has tail estimates for the two-point difference of the process $\h_t(w)+\frac{w^2}{2}$. Let us temporarily set
\begin{align*}
    A:=\calZ_t^{\nw}(x+y)e^{\frac{(x+y)^2-y^2}{2t}}=\calZ_t^{\nw}(y)\exp\bigg(t^{1/3}\big(\h_t(u+v)+\tfrac{(u+v)^2}{2}-\h_t(v)-\tfrac{v^2}{2}\big)\bigg).
\end{align*}
 In view of the above equality, we have the following decomposition:
	\begin{equation}\label{e:rrw0}
		\begin{aligned}
			\frac{\calZ_t^{\nw}(x+y)-\calZ_t^{\nw}(y)}{\sqrt{|x|}} & = \frac{\calZ_t^{\nw}(x+y)-A}{\sqrt{|x|}}+\frac{A-\calZ_t^{\nw}(y)}{\sqrt{|x|}} \\ &  = \frac{\calZ_t^{\nw}(x+y)\big[1-e^{\frac{(x+y)^2-y^2}{2t}}\big]}{\sqrt{x}} \\ & \hspace{2cm}+ \calZ_t^{\nw}(y)\frac{\left[e^{t^{1/3}[\h_t(u+v)+\frac{(u+v)^2}{2}-\h_t(v)-\frac{v^2}{2}]}-1\right]}{\sqrt{|x|}}.
		\end{aligned}
	\end{equation}
As $|a+b|^k \le 2^{k-1}(|a|^k+|b|^k)$, it suffices to provide bounds for the $k$-th moment for each of the two terms in r.h.s.~of \eqref{e:rrw0} separately. As the second term is a product of two terms, using $|cd|^k \le |c|^{2k}+|d|^{2k}$, it is enough to provide moment bounds for $\mathcal{Z}_t^{\nw}(y)$ and $\left[e^{t^{1/3}[\h_t(u+v)+\frac{(u+v)^2}{2}-\h_t(v)-\frac{v^2}{2}]}-1\right]$ separately.	Now Proposition \ref{umom} ensures all moments of $\mathcal{Z}_t^{\nw}(y)$ are uniformly bounded. On the other hand, By Theorem 1.3 in \cite{kpztime} for all $v\in \R$ and $u\in [-1,1]$ we have
	\begin{align*}
		\Pr(|\h_t(u+v)+\tfrac{(u+v)^2}{2}-\h_t(v)-\tfrac{v^2}{2}|\ge m\sqrt{|u|}) \le \Con\exp\left(-\tfrac1\Con m^{3/2}\right),
	\end{align*}
where the constant $\Con>0$ depends only on $T$. Since $u=t^{-2/3}x$ and $t\in [T^{-1},T]$. Using the above estimate, in view of Lemma \ref{lhz2}, we get that 
	\begin{align*}
		\sup_{\substack{t\in [T^{-1},T]\\ 0<|x|\le T^{-2/3}, v\in \R}}\Ex\left[\frac{|e^{t^{1/3}[\h_t(t^{-2/3}x+v)+\frac{(t^{-2/3}x+v)^2}{2}-\h_t(v)-\frac{v^2}{2}]}-1|^{2k}}{|x|^{k}}\right] <\infty.
	\end{align*}
This leads to a uniform moment bound for the second term in \eqref{e:rrw0}. For the first term, using Proposition \ref{umom} we have
	\begin{align*}
		\big|1-e^{\frac{(x+y)^2-y^2}{2t}}\big|^k|x|^{-k/2}\Ex[\calZ_t^{\nw}(x+y)^k] & \le \Con|x|^{-k/2}\big|1-e^{\frac{(x+y)^2-y^2}{2t}}\big|^ke^{-\frac{k(x+y)^2}{2t}} \\ & = \Con\bigg[|x|^{-1/2}\big|e^{-\frac{(x+y)^2}{2t}}-e^{-\frac{y^2}{2t}}\big|\bigg]^k\\ & \le \Con\bigg[|x|^{1/2}\cdot\sup_{y\in \R, t\in [T^{-1},T]}\big|\partial_y (e^{-\frac{y^2}{2t}})\big|\bigg]^k,
	\end{align*}
	where the last inequality follows by applying the mean value theorem and then taking supremum over the derivative. The last term is uniformly bounded over $0< x\le T^{-2/3}\le 1$ (as $T\ge 1$). This completes the proof.}
\end{proof}

\begin{proof}[Proof of Lemma \ref{lhz2}] {Due to \eqref{htail}, for all $p\in [M^{-1},M]$, for all $m>0$, and $s\neq t\in I$ 
we have
$$\Pr\left(|pX(s,t)|\le m|s-t|^{\beta}\right) \le \Con\exp\left(-\tfrac1\Con m^{3/2}p^{-3/2}\right) \le \til{\Con}\exp\left(-\tfrac1{\til{\Con}} m^{3/2}\right)$$ where $\til{\Con}=M^{3/2}\Con$. Thus it is enough to show \eqref{htoz} assuming $p=1$. Toward this end, we will show 
	$$\frac{e^{X(s,t)}-1}{|s-t|^{\beta}}$$
	has good enough upper and lower tail decay uniform in $s,t\in I$ with $0<|t-s|<1$. Assume $m\ge 2$. For the upper tail using \eqref{htail} we have
	\begin{align} \nonumber
		\Pr\left(e^{X(s,t)}-1\ge m{|s-t|^{\beta}}\right) & \le \Pr\left(|X(s,t)|\ge \log(1+m|s-t|^{\beta})\right) \\ & \le  \Con \exp\left(-\tfrac1\Con |t-s|^{-\alpha\beta}{\log^{\alpha}(1+m|t-s|^{\beta})}\right). \label{e:a11} 
	\end{align}	
		Note that for $u\ge 1$ and $|t-s|\le 1$, we have 
	$\frac{|t-s|^{\beta}}{1+u|t-s|^{\beta}} \ge \frac{|t-s|^{\beta}}{2u}$. Integrating both sides w.r.t.~$u$ from $1$ to $m$ leads to
 $$\log(1+m|t-s|^{\beta}) \ge \tfrac12|t-s|^{\beta}\log m+\log(1+|t-s|^{\beta}) \ge \tfrac12|t-s|^{\beta}\log m.$$ Thus, $\mbox{r.h.s.~of \eqref{e:a11}}  \le \Con \exp\left(-\frac1{2^{\alpha}\Con}\log^{\alpha} m\right)$.}

 \medskip

	{For the lower tail observe that if $m|s-t|^{\beta}\ge 1$, then $e^{X(s,t)}-1 > -m|s-t|^{\beta}$. Thus we may assume $m|s-t|^{\beta}<1$. Take $u\ge 1$ with $u|s-t|^{\beta}<1$.  We have $\frac{{|s-t|^{\beta}}}{1-u{|s-t|^{\beta}}} \ge \frac{{|s-t|^{\beta}}}u$. Integrating both sides w.r.t.~$u$ from $1$ to $m$ leads to $$-\log(1-m\sqrt{|s-t|^{\beta}}) \ge \sqrt{|s-t|^{\beta}}\log m-\log(1-|s-t|^{\beta}) \ge |s-t|^{\beta}\log m.$$ 
	Using this inequality and \eqref{htail} we get
	\begin{align*} \nonumber
		\Pr\left(e^{X(s,t)}-1\le -m{|s-t|^{\beta}}\right) & \le \Pr\left(|X(s,t)|\ge -\log(1-m|s-t|^{\beta})\right) \\ & \le  \Con \exp\left(-\tfrac1\Con |t-s|^{-\alpha\beta}{[-\log(1-m|t-s|^{\beta})]^{\alpha}}\right) \\ & \le \exp\left(-\tfrac1{\Con}\log^{\alpha} m\right). 
	\end{align*}	
As $\alpha>1$, both the upper and lower tail decay faster than any polynomial. This proves \eqref{htoz}.} 
\end{proof}

\begin{proof}[Proof of Proposition \ref{spmomf}] Fix any $f\in \hypz$.  Fix $|x|\le \frac12$. {By convolution formula (Proposition \ref{p:conv}) and the elementary inequality $|\int g(y)\d y|\le \int |g(y)|\d y$ we have
		\begin{align*}
			|\calZ_t^{f}(x)-\calZ_t^{f}(0)| & = \left|\int_{\R} e^{f(y)}[\calZ_t^{\nw, y}(x)-\calZ_t^{\nw,y}(0)] \d y\right| \\ & \le  \int_{\R} e^{f(y)}\left|\calZ_t^{\nw, y}(x)-\calZ_t^{\nw,y}(0)\right| \d y. 
		\end{align*}
		Similar to \eqref{f11} and \eqref{f1}, we first rewrite the above integrand $e^{f(y)}\left|\calZ_t^{\nw, y}(x)-\calZ_t^{\nw,y}(0)\right|$ as product of $e^{\frac{y^2}{4t}}e^{f(y)}\left|\calZ_t^{\nw, y}(x)-\calZ_t^{\nw,y}(0)\right|$ and $e^{-\frac{y^2}{4t}}$. Then apply H\"older inequality with exponents $k$ and $\ell=\frac{k}{k-1}$ to get
		\begin{align}
			\Ex\left[\left|\calZ_t^{f}(x)-\calZ_t^{f}(0)\right|^k\right] \le \left(\tfrac{4s\pi}{\ell}\right)^{\frac{k-1}2}\int_{\R} e^{\frac{ky^2}{4t}}M_k^f(y) \Ex\big[|\calZ_t^{\nw, y}(x)-\calZ_t^{\nw,y}(0)|^k\big]\,\d y, \label{350}
		\end{align}
		where $M_k^f(x)$ is defined in \eqref{mkf}.}	Using 	\eqref{idenp} and \eqref{mkcond} we have
	\begin{align}\label{3.5.1}
		\mbox{r.h.s.~of \eqref{350}} & \le \Con \int_{\R} \exp\left(\tfrac{k y^2}{4t}+\lambda_k(1+|y|^{2-\delta_k})\right)\Ex\big[|\calZ_t^{\nw}(x+y)-\calZ_t^{\nw}(y)|^k\big]\d y.
	\end{align}
{By spatial regularity estimate from Proposition \ref{spmom} we have	
 \begin{align}\label{awe}
		\Ex\big[|\calZ_t^{\nw}(x+y)-\calZ_t^{\nw}(y)|^k\big] \le \Con\cdot |x|^{k/2}.
	\end{align}
On the other hand, by moment estimate from Proposition \ref{umom} we have	
 \begin{align*}
		\Ex\big[|\calZ_t^{\nw}(x+y)-\calZ_t^{\nw}(y)|^k\big] & \le 2^{k-1}\left[\Ex\big[|\calZ_t^{\nw}(x+y)|^k\big]+\Ex\big[|\calZ_t^{\nw}(y)|^k\big]\right] \\ & \le \Con\left[\exp(-\tfrac{k(x+y)^2}{2t})+\exp(-\tfrac{k y^2}{2t})\right]
	\end{align*}
where the constant $\Con>0$ depends only on $k,T$. Here we crucially used the fact that $t$ has a lower bound, i.e., $t\ge T^{-1}$ to derive the constant $\Con$ free of $t$. Now as $|x|\le \frac12$, we may choose $y_0(\mathfrak{q}_k,k,T)>0$, such that for all $|y|\ge y_0$ and for all $t\in [T^{-1},T]$ we have $$\exp(-\tfrac{k(x+y)^2}{2t})+\exp(-\tfrac{k y^2}{2t}) \le \Con\exp(-\lambda_k(1+|y|^{2-\delta_k})-\tfrac{k y^2}{4t}-\tfrac{k y^2}{8t})$$
for some $\Con$ depending on $\mathfrak{q}_k,k,T$.} Thus combining this estimate with \eqref{awe}, we have that for all $|y|\ge y_0$ and $|x|\le \frac12$,
	\begin{align*}
		\Ex\big[|\calZ_t^{\nw}(x+y)-\calZ_t^{\nw}(y)|^k\big] \le \Con\cdot \min \left\{|x|^{k/2}, \exp\left(-\lambda_k(1+|y|^{2-\delta_k})-\tfrac{k y^2}{4t}-\tfrac{k y^2}{8t}\right)\right\}.
	\end{align*}
	Thus, 
	\begin{align*}
		\mbox{r.h.s.~of \eqref{3.5.1}} \le \Con\int_{-y_0}^{y_0} |x|^{k/2} e^{\lambda_k|y|^{2-\delta_k}}\,\d y+\Con\int_{|y|\ge y_0} \min\left\{|x|^{k/2}e^{\lambda_k|y|^{2-\delta_k}},e^{-\tfrac{k y^2}{8T}}\right\}\,\d y.
	\end{align*}
	The first term is indeed bounded by $\Con \cdot |x|^{k/2}$. For the second term, we set $y_1=\sqrt{2T\log (1/x)}$ and observe  
	\begin{align*}
		\int_{|y|\ge y_1} \min\left\{|x|^{k/2}e^{\lambda_k|y|^{2-\delta_k}},e^{-\tfrac{k y^2}{8T}}\right\}\,\d y \le \int_{|y|\ge y_1} e^{-\tfrac{k y^2}{8T}}\,\d y \le \Con|x|^{k},
	\end{align*}
	\begin{align*}
		\int_{|y|\le y_1} \min\left\{|x|^{k/2}e^{\lambda_k|y|^{2-\delta_k}},e^{-\tfrac{k y^2}{8T}}\right\}\,\d y \le 2y_1|x|^{k/2} e^{\lambda_k|y_1|^{2-\delta_k}} \le \Con |x|^{k\theta}.
	\end{align*}
	The last inequality is true for all small enough $x$ depending on $\lambda_k$, $\delta_k$, $\theta$, and $T$. This is due to the fact $y_1$ and $e^{\lambda_k|y_1|^{2-\delta_k}}$ are both sub-polynomial order. This completes the proof. 
\end{proof}

\subsection{Temporal regularity estimate}\label{sec:tpmom} The main goal of this section is to prove the following proposition that provides temporal regularity estimates for the \ac{SHE} started from initial data $f$ in $\hypz$ class defined in Definition \ref{hyp}.

\begin{proposition}\label{tpmomf} Fix $\mathfrak{q}\in ((0,\infty)\times (0,1])^{\mathbb{N}}$, $T>1$ and $k\ge 2$.  There exists a constant  $\Con=\Con(\mathfrak{q}_k,k,T)>0$ such that for all $f\in \hypz$ we have
	\begin{align*}
		\sup_{s\neq t\in [T^{-1},T]} \Ex\left[ \frac{|\calZ_s^{f}(0)-\calZ_t^{f}(0)|^k}{|s-t|^{k/4}}\right]= \Con<\infty.
	\end{align*}
\end{proposition}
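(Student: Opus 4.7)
The plan is to use the mild formulation of the \ac{SHE} combined with the Burkholder--Davis--Gundy (BDG) inequality for white-noise stochastic integrals, fed by the uniform moment bound from Proposition \ref{umomf}. As a preliminary reduction, I would handle the narrow wedge case separately via the Markov property of \ac{SHE} at time $t_0=T^{-1}/2$: by Proposition \ref{umom}, the random ``initial datum'' $f_{t_0}:=\log\calZ^{\nw}(t_0,\cdot)$ satisfies $\log M_k^{f_{t_0}}(y)\le \Con(k,T)$, so $f_{t_0}\in\hypz$. For $t\in[T^{-1},T]$ we have $\calZ^{\nw}(t,0)=\tilde\calZ^{f_{t_0}}_{t-t_0}(0)$ driven by the white noise restricted to $(t_0,\infty)\times\R$, which is independent of $f_{t_0}$. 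Hence it suffices to prove Proposition \ref{tpmomf} for functional $f\in\hypz$.

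For such $f$ and $T^{-1}\le s<t\le T$, the mild formulation gives the decomposition
\begin{align*}
\calZ_t^f(0)-\calZ_s^f(0) &= \underbrace{\int_\R[p_t(y)-p_s(y)]e^{f(y)}\,\d y}_{A_1} + \underbrace{\int_s^t\int_\R p_{t-u}(y)\calZ_u^f(y)\,\xi(\d u,\d y)}_{A_2} \\
&\quad + \underbrace{\int_0^s\int_\R[p_{t-u}(y)-p_{s-u}(y)]\calZ_u^f(y)\,\xi(\d u,\d y)}_{A_3}.
\end{align*}
The term $A_1$ is controlled via Minkowski's integral inequality together with the pointwise estimate $|p_t(y)-p_s(y)|\le \Con|t-s|(y^2+T)e^{-y^2/(2T)}$ for $s,t\in[T^{-1},T]$ (from integrating $\partial_r p_r=\tfrac12\partial_{yy}p_r$). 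Combined with the sub-quadratic bound on $M_k^f(y)^{1/k}\le e^{\lambda_k(1+|y|^{2-\delta_k})/k}$, this yields $\Ex[|A_1|^k]^{1/k}\le \Con(t-s)$, which easily dominates the target $(t-s)^{1/4}$.

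For $A_2$ and $A_3$ I would apply BDG followed by Minkowski's integral inequality to pull the $L^k(\Pr)$ norm inside the space-time integral, giving
\begin{align*}
\Ex[|A_2|^k]^{2/k}\le \Con\int_s^t\int_\R p_{t-u}(y)^2\,\Ex[|\calZ_u^f(y)|^k]^{2/k}\,\d y\,\d u,
\end{align*}
and the analogue with kernel $[p_{t-u}(y)-p_{s-u}(y)]^2$ for $A_3$. Plugging in $\Ex[|\calZ_u^f(y)|^k]^{2/k}\le \Con e^{\Con|y|^{2-\delta_k}}$ from Proposition \ref{umomf} and performing a change of variable $z=y/\sqrt{t-u}$ (under which the sub-quadratic weight is dominated by the Gaussian factor for $t-u\le T$) yields $\int p_{t-u}(y)^2 e^{\Con|y|^{2-\delta_k}}\,\d y\le \Con/\sqrt{t-u}$, whence $\Ex[|A_2|^k]^{2/k}\le \Con\sqrt{t-s}$. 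For $A_3$ the target estimate
\begin{align*}
\int_0^s\int_\R[p_{t-u}(y)-p_{s-u}(y)]^2 e^{\Con|y|^{2-\delta_k}}\,\d y\,\d u\le \Con\sqrt{t-s}
\end{align*}
follows, in its unweighted form, from the explicit identity $\int[p_a-p_b]^2\,\d y=\tfrac{1}{2\sqrt{\pi a}}+\tfrac{1}{2\sqrt{\pi b}}-\tfrac{2}{\sqrt{2\pi(a+b)}}$, which vanishes to second order at $a=b$ and yields $\int[p_a-p_b]^2\,\d y\le \Con\min\{(a-b)^2/b^{5/2},1/\sqrt{b}\}$. Integration over $u\in[0,s]$ with $a=t-u$, $b=s-u$, split at $s-u=t-s$, then produces the $\sqrt{t-s}$ scaling. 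The exponential weight is absorbed using the identity $e^{\epsilon y^2/2}p_a(y)=(1-\epsilon a)^{-1/2}p_{a/(1-\epsilon a)}(y)$ (valid for $\epsilon<1/T$), which preserves the cancellation structure and only shifts the relevant variances and constants.

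The main technical obstacle is the sharp $\sqrt{t-s}$ bound on the weighted heat-kernel integral for $A_3$: one has to ensure that the second-order cancellation in $p_{t-u}-p_{s-u}$ is not destroyed by the sub-quadratic exponential weight $e^{\Con|y|^{2-\delta_k}}$, and that the singular behavior of the unweighted integrand as $s-u\to 0$ is integrable against the correct power. Combining $\Ex[|A_1|^k]^{1/k}\le \Con(t-s)$, $\Ex[|A_2|^k]^{1/k}\le \Con(t-s)^{1/4}$, and $\Ex[|A_3|^k]^{1/k}\le \Con(t-s)^{1/4}$ then gives $\Ex[|\calZ_t^f(0)-\calZ_s^f(0)|^k]\le \Con(t-s)^{k/4}$ uniformly in $s,t\in[T^{-1},T]$ and $f\in\hypz$, completing the proof.
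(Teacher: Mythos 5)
Your proposal is correct and follows the same overall skeleton as the paper: decompose via the mild formulation, apply BDG plus Minkowski to reduce to weighted heat-kernel integrals, and feed in the uniform moment bound of Proposition \ref{umomf}. (You even include the deterministic term $A_1$, which the paper's displayed decomposition in this particular proof omits but which is handled identically in the proof of Proposition \ref{p:comp}.) Where you genuinely diverge is in the two supporting ingredients. First, for the narrow wedge case the paper does not restart the equation; instead its heat-kernel lemma (Lemma \ref{heat2}) carries the explicit narrow-wedge moment bound $\Con s^{-1}e^{-y^2/s}$ from Proposition \ref{umom} as a separate case alongside the functional-data bound $e^{\Delta|y|^{2-\delta_k}}$. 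Your Markov-restart reduction at $t_0=T^{-1}/2$ is clean and valid (by Proposition \ref{umom} the restarted datum has uniformly bounded $M_k$, hence lies in $\hypz$, and is independent of the post-$t_0$ noise), and it buys you a single unified case at the cost of shrinking the time interval to $[(2T)^{-1},T]$. Second, for the $A_3$ kernel integral the paper splits the time integral at $s-u=\e^{1/4}$ and uses a pointwise mean-value bound $|p_{r+\e}(y)-p_r(y)|\le\Con\e r^{-3/2}$ on the far region together with an $|y|\lessgtr 1$ split near $t$; you instead use the exact $L^2$ identity, the convexity/Taylor bound $\int[p_a-p_b]^2\le\Con\min\{(a-b)^2b^{-5/2},b^{-1/2}\}$, and a split at $s-u=t-s$. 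Your route is arguably sharper and more transparent; the one step you should write out is the absorption of the weight: after dominating $e^{\Con|y|^{2-\delta_k}}$ by $\Con' e^{\epsilon y^2}$ and applying the identity $e^{\epsilon y^2/2}p_a=(1-\epsilon a)^{-1/2}p_{a/(1-\epsilon a)}$, the two prefactors $(1-\epsilon a)^{-1/2}$ and $(1-\epsilon b)^{-1/2}$ do not cancel, so you pick up a cross term $(c_a-c_b)^2\int p_{b'}^2\,\d y=O(\epsilon^2(a-b)^2 b^{-1/2})$, which integrates to $O((t-s)^2)$ and is harmless — but it must be acknowledged, since the difference of weighted kernels is not literally a difference of two heat kernels.
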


The proof of Proposition \ref{tpmomf} relies on some heat kernel estimates which we record below.

\begin{lemma}\label{heat} Fix $\theta\in (0,\frac12)$ and $\Delta,\delta_k>0$, one can find constants $\rho_0=\rho_0(\theta,\Delta,\delta_k)>0$ and $\Con=\Con(\theta,\Delta,\delta_k)>0$ such that for all $\rho\in (0,\rho_0)$ we have
	\begin{align}\label{thet}
		\qquad \int_0^{\rho}\int_{\R}p^2_{r}(y)\min\{ |y|^{2\theta}, e^{\Delta |y|^{2-\delta_k}}\}\,\d y\,\d r \le \Con \rho^{\frac12+\theta}.
	\end{align}
	Additionally we also have	\begin{align} \label{nthet}
		\int_0^{\rho}\int_{\R}p^2_{r}(y)e^{\Delta |y|^{2-\delta_k}}\,\d y\,\d r \le \Con \rho^{\frac12},
	\end{align}
	for all $\rho\in (0,\rho_0)$ where now $\rho_0$ and $\Con$ can be chosen free of $\theta$.
\end{lemma}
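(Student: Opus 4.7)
The plan is to reduce both estimates to elementary Gaussian integrals after dominating the auxiliary factors in the integrand. Recall that $p_r(y) = (2\pi r)^{-1/2}e^{-y^2/(2r)}$, so $p_r^2(y) = (2\pi r)^{-1}e^{-y^2/r}$. Since $p_r^2$ concentrates near $y=0$ on scale $\sqrt{r}$, both of the given factors are well behaved there.

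For \eqref{nthet}, the key step is to absorb the factor $e^{\Delta|y|^{2-\delta_k}}$ into a fraction of the Gaussian. Write $e^{-y^2/r+\Delta|y|^{2-\delta_k}} = e^{-y^2/(2r)}\,e^{g_r(y)}$ with $g_r(y) := -y^2/(2r) + \Delta|y|^{2-\delta_k}$. A short calculus exercise shows that $g_r$ attains its maximum on $[0,\infty)$ at $z_\ast = (\Delta(2-\delta_k)r)^{1/\delta_k}$ with value $\tfrac{\Delta\delta_k}{2}z_\ast^{2-\delta_k}$, which vanishes as $r\downarrow 0$ provided $\delta_k<2$ (the regime produced via Lemma \ref{gid} in fact gives $\delta_k\le 1$). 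Hence for $r<\rho_0(\Delta,\delta_k)$ small enough one has $g_r\le 1$ uniformly in $y$, and
\begin{align*}
\int_0^{\rho}\int_{\R} p_r^2(y)\,e^{\Delta|y|^{2-\delta_k}}\,\d y\,\d r \le e\int_0^{\rho} \frac{1}{2\pi r}\int_{\R} e^{-y^2/(2r)}\,\d y\,\d r = e\int_0^{\rho}(2\pi r)^{-1/2}\,\d r = \Con\sqrt{\rho},
\end{align*}
with $\rho_0$ and $\Con$ depending only on $\Delta$ and $\delta_k$, as claimed.

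For \eqref{thet}, the trivial majorisation $\min\{|y|^{2\theta},e^{\Delta|y|^{2-\delta_k}}\}\le|y|^{2\theta}$ already suffices, so the problem reduces to bounding $\int_0^{\rho}\int_{\R} (2\pi r)^{-1}e^{-y^2/r}|y|^{2\theta}\,\d y\,\d r$. The change of variables $y=\sqrt{r}\,u$ collapses the inner integral to $\Con(\theta)\,r^{\theta-1/2}$, where $\Con(\theta) = (2\pi)^{-1}\int_{\R} e^{-u^2}|u|^{2\theta}\,\d u<\infty$. Since $\theta\in(0,\tfrac12)$ gives $\theta-\tfrac12>-1$, the outer integral converges and equals $\Con(\theta)(\theta+\tfrac12)^{-1}\rho^{\theta+\frac12}$, matching the claim.

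Both arguments are essentially one-line Gaussian computations; the only mildly delicate point is verifying that $\sup_y g_r(y)$ vanishes (rather than merely staying bounded) as $r\downarrow 0$ in the first part, which rests crucially on $\delta_k<2$. Without this feature one could not absorb the subexponential factor into the Gaussian tail and would instead have to split the domain of integration, for instance at $|y|\sim \sqrt{r\log(1/r)}$, and estimate the two pieces separately.
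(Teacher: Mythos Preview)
Your argument is correct and in fact cleaner than the paper's. The paper proves \eqref{thet} by splitting the $y$-integral at a threshold $c_0$ chosen so that $|y|^{2\theta}\le e^{\Delta|y|^{2-\delta_k}}$ for $|y|\le c_0$; on $|y|\le c_0$ it performs exactly your scaling computation to get $\Con\rho^{1/2+\theta}$, while on $|y|\ge c_0$ it uses the exponential branch of the minimum together with the inequalities $e^{-y^2/r}\le r^2 e^{-y^2/(2r)}$ and $y^2/(4r)\ge\Delta|y|^{2-\delta_k}$ (for $\rho$ small) to obtain a contribution of order $\rho^{3/2}$. Your observation that the trivial majorisation $\min\{|y|^{2\theta},e^{\Delta|y|^{2-\delta_k}}\}\le|y|^{2\theta}$ already gives $\Con\rho^{1/2+\theta}$ on the whole line bypasses this split entirely.

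For \eqref{nthet} the paper again splits at $c_0=1$, bounding the near-field piece by $e^{\Delta}\int_0^\rho\int_{|y|\le 1}p_r^2(y)\,\d y\,\d r$ and the far-field piece as before. Your global approach---showing $\sup_y\big(-y^2/(2r)+\Delta|y|^{2-\delta_k}\big)\to 0$ as $r\downarrow 0$ and hence absorbing the sub-parabolic factor uniformly into half of the Gaussian---is more elegant and makes the dependence on $\delta_k<2$ transparent. The paper's splitting has the minor advantage that it never optimises $g_r$, but both routes amount to the same Gaussian bookkeeping.
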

\begin{lemma}\label{heat2} Fix $\mathfrak{q}\in ((0,\infty)\times (0,1])^{\mathbb{N}}$, $T>1$, and $a\in [\frac12,\infty]$. There exist constants $\e_0(\mathfrak{q}_k,k,T)>0$ and $\Con(\mathfrak{q}_k,k,T)>0$, such that for all $\e\le \e_0$ and for all $f\in \hypz$ we have
	\begin{align*}
		\int_0^{t-\e^a}\int_{\R}[p_{t+\e-s}(x)-p_{t-s}(x)]^2\big[1+\Ex[\calZ_s^f(x)^k]^{\frac2k}\big]\,\d x\,\d s \le \Con \cdot \e^{2-\frac{3}2 \min(a,1)}.
	\end{align*}
\end{lemma}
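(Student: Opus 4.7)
The plan is to use Proposition \ref{umomf} to reduce the lemma to a purely deterministic heat-kernel estimate, and then carry out the deterministic integral by splitting according to whether $r = t-s$ lies above or below $\e$. More precisely, Proposition \ref{umomf} yields, uniformly over $f\in \hypz$ and $s\in [0,T]$, the pointwise bound $\Ex[\calZ_s^f(x)^k]^{2/k} \le \Con\, e^{\Con|x|^{2-\delta_k}}$. Substituting into the integrand and setting $r = t-s$, the lemma follows provided one can show
\[
J(\e) := \int_{\e^a}^{t}\!\!\int_{\R}[p_{r+\e}(x)-p_r(x)]^2\, e^{\Con|x|^{2-\delta_k}}\,\d x\,\d r \;\le\; \Con\, \e^{2-\frac{3}{2}\min(a,1)},
\]
since the ``$1$'' in the factor $1+\Ex[\cdot]^{2/k}$ gives an even smaller contribution that is handled by the same argument without the subgaussian weight.

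The first step of the deterministic computation is to absorb the subgaussian weight into the Gaussian tails of the heat kernel. Using the elementary inequality $\Con|x|^{2-\delta_k} \le \tfrac{x^2}{2u} + K$, valid for all $u \in (0, T+1]$ with a single constant $K = K(\Con, T, \delta_k)$, together with the explicit formula $\partial_u p_u(x) = \tfrac{1}{2u}(\tfrac{x^2}{u}-1)p_u(x)$, a direct calculation yields, for $u \le T+1$,
\[
\int_{\R} p_u(x)^2\, e^{\Con|x|^{2-\delta_k}}\, \d x \;\le\; \Con\, u^{-1/2}, \qquad \int_{\R} (\partial_u p_u(x))^2\, e^{\Con|x|^{2-\delta_k}}\, \d x \;\le\; \Con\, u^{-5/2}.
\]

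With these two weighted estimates in hand, I split the $r$-integration. For $r \ge \e$, writing $p_{r+\e}(x) - p_r(x) = \int_r^{r+\e} \partial_u p_u(x)\, \d u$ and applying Cauchy--Schwarz gives $[p_{r+\e}(x) - p_r(x)]^2 \le \e \int_r^{r+\e} (\partial_u p_u(x))^2\, \d u$; integrating against the weight and using the second display above yields $\int_{\R}[p_{r+\e} - p_r]^2\, e^{\Con|x|^{2-\delta_k}}\, \d x \le \Con\, \e^2\, r^{-5/2}$. For $r \le \e$, the crude bound $[p_{r+\e} - p_r]^2 \le 2(p_{r+\e}^2 + p_r^2)$ combined with the first display gives $\int_{\R}[p_{r+\e} - p_r]^2\, e^{\Con|x|^{2-\delta_k}}\, \d x \le \Con\, r^{-1/2}$.

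Finally, I conclude by case analysis on $a$. If $a \le 1$ then $\e^a \ge \e$ so only the large-$r$ regime contributes: $J(\e) \le \Con\, \e^2 \!\int_{\e^a}^t r^{-5/2}\, \d r \le \Con\, \e^{2 - 3a/2}$. If $a > 1$ (including $a = \infty$, interpreted via $\e^a = 0$), I split $[\e^a, t] = [\e^a, \e] \cup [\e, t]$, whose contributions are bounded by $\Con\!\int_{\e^a}^{\e} r^{-1/2}\, \d r \le \Con\, \e^{1/2}$ and $\Con\, \e^2 \!\int_{\e}^{t} r^{-5/2}\, \d r \le \Con\, \e^{1/2}$ respectively. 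Both cases match $\e^{2 - \frac{3}{2}\min(a,1)}$. The only delicate point is the uniformity of $K$ over all $u \in (0, T+1]$, which is automatic because smaller $u$ only strengthens the Gaussian cost $x^2/(2u)$ relative to the subgaussian growth $\Con|x|^{2-\delta_k}$.
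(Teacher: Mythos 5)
The deterministic heart of your argument is correct and is organized differently from the paper's proof: you establish the weighted bounds $\int_{\R} p_u(x)^2 e^{\Con|x|^{2-\delta_k}}\,\d x \le \Con u^{-1/2}$ and $\int_{\R}(\partial_u p_u(x))^2 e^{\Con|x|^{2-\delta_k}}\,\d x \le \Con u^{-5/2}$ once and for all, then split at $r=\e$, getting $\Con\,\e^2 r^{-5/2}$ for $r\ge\e$ from $p_{r+\e}-p_r=\int_r^{r+\e}\partial_u p_u\,\d u$ plus Cauchy--Schwarz, and $\Con\, r^{-1/2}$ for $r\le\e$ from the crude quadratic bound. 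The paper instead splits at $r=\e^{1/4}$, uses a sup-norm (mean value theorem) estimate on the weighted kernel increment for the far regime, and an explicit evaluation of $\int_\R[p_{r+\e}-p_r]^2\,\d y$ together with a telescoping identity for the near regime. Your route is arguably cleaner and yields the same exponent $2-\tfrac32\min(a,1)$ in both cases of $a$; I checked the two weighted integrals, the Cauchy--Schwarz step, the uniformity of $K$ over $u\in(0,T+1]$, and both case analyses.

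There is, however, a genuine gap in your opening reduction. Lemma \ref{heat2} is stated for all $f\in\hypz$, and item (1) of Definition \ref{hyp} places the narrow wedge (delta) initial data in $\hypz$. Proposition \ref{umomf} is stated only for \emph{functional} $f\in\hypz$, and the bound you extract from it, $\Ex[\calZ_s^f(x)^k]^{2/k}\le\Con\,e^{\Con|x|^{2-\delta_k}}$ uniformly in $s\in[0,T]$, is false for narrow wedge: the available estimate (Proposition \ref{umom}) is $\Ex[\calZ_s^{\nw}(x)^k]^{2/k}\le\Con\,(2\pi s)^{-1}e^{-x^2/s}$, and indeed $\Ex[\calZ_s^{\nw}(0)^k]\to\infty$ as $s\downarrow 0$, while the $s$-integral in the lemma reaches down to $s=0$. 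This is precisely why the paper's proof carries two parallel bounds, \eqref{bdg4} for narrow wedge and \eqref{bdg5} for functional data. The omission is fixable within your framework: for $s$ near $0$ one has $r=t-s\ge T^{-1}/2$, so $[p_{r+\e}(x)-p_{r}(x)]^2\le\Con\,\e^2$ uniformly there, and $\int_\R s^{-1}e^{-x^2/s}\,\d x=\sqrt{\pi/s}$ is integrable in $s$ near $0$, so this singular region contributes only $O(\e^2)$. But as written your proof covers only the functional members of $\hypz$ and must be supplemented by this extra case.
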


Besides the above lemmas, proof of Proposition \ref{tpmomf} also utilizes Burkholder-Davis-Gundy (BDG) inequality \cite[Lemma 2.3]{con}.  Since we will make repeated use of this inequality throughout, we recall it here for readers' convenience. 

{\begin{theorem}[BDG inequality] \label{thm9} For every $t \ge 0$, let $\mathcal{F}_t^0$ denote the sigma-algebra generated by every Wiener
integral of the form 
$$\int_{(0,t)\times \R} \psi_s(y) \xi(\d s, \d y)$$ as $\psi$ ranges over all elements of $L^2(\R_{+}\times \R)$. We complete every such sigma-algebra, and make the filtration
$\{\mathcal{F}_t\}_{t\ge0}$ right continuous in order to obtain the ``Brownian filtration'' $\mathcal{F}$ that corresponds to the white noise $\xi$.
Let $\Psi := \{\Psi_t(x)\}_{t\ge 0,x\in \R}$ be a predictable random field with respect to $\mathcal{F}$. Then, for every real number $k \in [2 , \infty)$, we have the following inequality:
\begin{align*}
    \Ex\left[\left|\int_{(0,t)\times \R} \Psi_s(y)\xi(\d s,\d y)\right|^k\right]^{\frac2k} \le 4k\int_{(0,t)\times \R} \Ex\left[\left|\Psi_s(y)\right|^k\right]^{\frac2k}\,\d s\,\d y.
\end{align*}
\end{theorem}}

Let us now first complete the proof of Proposition \ref{tpmomf} assuming Lemmas \ref{heat} and \ref{heat2}.

\begin{proof}[Proof of Proposition \ref{tpmomf}] By Proposition \ref{umomf} and \ref{umom}, Proposition \ref{tpmomf} holds for $s,t$ with $|s-t|$ bounded away from zero. So we assume $s>t$ and $s-t$ is small enough. Set $s=t+\e$ and from \eqref{fkdc} observe that
{\begin{equation}
    \label{ztef}
    \begin{aligned}
		\calZ_{t+\e}^f(0)-\calZ_t^f(0) & = \int_{\R} [p_{t+\e}(y)-p_t(y)]e^{f(y)}\,\d y+ \int_t^{t+\e}\int_{\R} p_{t+\e-s}(y)\calZ_s^f(y)\xi(\d s,\d y) \\ & \hspace{1cm}+\int_0^{t}\int_{\R}[p_{t+\e-s}(y)-p_{t-s}(y)]\calZ_s^f(y)\xi(\d s,\d y).
	\end{aligned}
\end{equation}
Our goal is to control the moments for each term appearing  in r.h.s.~of \eqref{ztef}.
For the first term, we claim that
\begin{align} \label{claimg}
    \Ex\bigg[\big|\int_{\R} \left[p_{t+\e}(y)-p_t(y)\right]e^{f(y)}\d y\big|^k\bigg] \le \Con \cdot \e^k
\end{align}
where $\Con$ depends on $k,T, \lambda_k$ and $\delta_k$. Towards this end, we first note that
		\begin{align*}
			\left|\int_{\R} [p_{t+\e}(y)-p_t(y)] e^{f(y)} \d y \right| \le \int_{\R} |p_{t+\e}(y)-p_t(y)| e^{f(y)} \d y. 
		\end{align*}
		Writing  $|p_{t+\e}(y)-p_t(y)| e^{f(y)}$ as product of $e^{\frac{ y^2}{8T}}|p_{t+\e}(y)-p_t(y)| e^{f(y)}$ and $e^{-\frac{y^2}{8T}}$, and applying H\"older inequality with exponent $k$ and $\ell=\frac{k}{k-1}$ we get
		\begin{align*}
			\left(\int_{\R} |p_{t+\e}(y)-p_t(y)| e^{f(y)} \d y \right)^k \le \left(\int_{\R} e^{-\frac{\ell y^2}{8T}}\right)^{k-1}\int_{\R} e^{\frac{k y^2}{8T}}|p_{t+\e}(y)-p_t(y)|^k e^{kf(y)} \d y. 
		\end{align*}
		Hence  we have
		\begin{align}
			\label{yier}
			\Ex\bigg[\big|\int_{\R} \left[p_{t+\e}(y)-p_t(y)\right]e^{f(y)}\d y\big|^k\bigg] \le \Con \int_{\R} e^{\frac{k y^2}{8T}}|p_{t+\e}(y)-p_t(y)|^k \Ex[e^{kf(y)}] \d y ,
	\end{align}
	where $\Con$ depends on $k,T$. But by mean value theorem we get that
	\begin{align*}
	    e^{\frac{k y^2}{8T}}|p_{t+\e}(y)-p_t(y)|^k\Ex[e^{kf(y)}] & \le \e^k \cdot \sup_{t\in [T^{-1},T+1]} e^{\frac{k y^2}{8T}}M_k^f(y)|\partial_t p_t(y)|^k \\ & \le \Con \e^k \cdot  e^{\frac{k y^2}{8T}}e^{\lambda_k(1+|y|^{2-\delta_k})}y^2e^{-\frac{ky^2}{2T}},
	\end{align*}
	where we have used the condition \eqref{mkcond} in the last inequality above. Integrating both sides of the above equation over $y\in \R$, we get that  r.h.s.~of \eqref{yier} is at most $\Con \cdot \e^k$ where $\Con$ depends on $k,T, \lambda_k$ and $\delta_k$. This verifies \eqref{claimg}.}

	To compute the moments of the second and third term in r.h.s.~of \eqref{ztef}, we apply BDG inequality (Theorem \ref{thm9}) on each of the above terms to get
 \begin{align*}
     & \Ex\left[\left|\int_t^{t+\e}\int_{\R} p_{t+\e-s}(y)\calZ_s^f(y)\xi(\d s,\d y)\right|^k\right]^{\frac2k} \\ & \hspace{3cm}\le \Con\int_t^{t+\e}\int_{\R}p_{t+\e-s}(y)^2\Ex[\calZ_s^f(y)^k]^{\frac2k} \d y\,\d s, \\
     & \Ex\left[\left|\int_0^{t}\int_{\R}[p_{t+\e-s}(y)-p_{t-s}(y)]\calZ_s^f(y)\xi(\d s,\d y)\right|^k\right]^{\frac2k} \\ & \hspace{3cm}\le \Con\int_0^{t}\int_{\R}[p_{t+\e-s}(y)-p_{t-s}(y)]^2\Ex[\calZ_s^f(y)^k]^{\frac2k}\d y\,\d s. 
 \end{align*}
 In view of the above moment estimates along with \eqref{claimg}, leads to moment estimates for the temporal difference in view of the identity in \eqref{ztef}. Indeed, using the elementary inequalities $|a+b+c|^k \le 3^{k-1}(|a|^k+|b|^k+|c|^k)$ and $|x+y+z|^{\frac2k} \le |x|^{\frac2k}+|y|^{\frac2k}+|z|^{\frac2k}$ for $k\ge 2$ we get that
	\begin{align*}
		\Ex\left[\big|\calZ_{t+\e}^f(0)-\calZ_t^f(0)\big|^k\right]^{\frac2k} & \le \Con \cdot \e^k+\Con\int_t^{t+\e}\int_{\R}p_{t+\e-s}(y)^2\Ex[\calZ_s^f(y)^k]^{\frac2k} \d y\,\d s \\ & \hspace{2cm}+\Con\int_0^{t}\int_{\R}[p_{t+\e-s}(y)-p_{t-s}(y)]^2\Ex[\calZ_s^f(y)^k]^{\frac2k}\d y\,\d s.
	\end{align*}
	By the moment estimates from Proposition \ref{umom} and Proposition \ref{umomf} we see that the first double integral above is at most $\int_0^{\e}\int_\R p_r^2(y)e^{\Delta |y|^{2-\delta}}\,\d y\, \d r$ which is at most $\Con \e^{\frac12}$ by Lemma \ref{heat} with $\rho=\e$. The second double integral above is at most $\Con \e^{\frac12}$ as well from Lemma \ref{heat2} with $a=\infty$. This completes the proof.	
\end{proof}

\begin{proof}[Proof of Lemma \ref{heat}] Suppose $c_0$ be such that $|y|^{2\theta}\le e^{\Delta|y|^{2-\delta_k}}$ for all $|y|\le c_0$. We separate the $y$ integral into two parts depending on $|y|\le c_0$ and $|y|\ge c_0$. Making a change of variable $x\mapsto \frac{y}{\sqrt{r}}$ we see that
	\begin{align*}
		\int_0^{\rho}\int_{|y|\le c_0}p^2_{r}(y) |y|^{2\theta}\,\d y\,\d r = \frac1{2\pi}\int_0^{\rho} r^{\theta-\frac12}\int_{|x|\le c_0/\sqrt{r}}e^{-x^2} |x|^{2\theta}\,\d x\,\d r \le \Con \int_0^{\rho} r^{\theta-\frac12}\,\d r \le \Con \rho^{\frac12+\theta}.
	\end{align*}
	For the $|y|\ge c_0$ integral, we can choose $\rho_0$ depending on $c_0$ and other parameters so that the following two conditions hold:
	\begin{itemize}
		\item $e^{-\frac{y^2}{r}} \le r^2e^{-\frac{y^2}{2r}}$ for all $r\le \rho \le \rho_0$ and $|y| \ge c_0$, and 
		\item $\frac{y^2}{4r} \ge \Delta |y|^{2-\delta_k}$ for all $|y|\ge c_0$ and $r\le \rho\le \rho_0$.
	\end{itemize}  
	Then for all $\rho\le \rho_0$ we have 
	\begin{align*}
		\int_0^{\rho}\int_{|y|\ge c_0}p^2_{r}(y)e^{\Delta |y|^{2-\delta_k}}\,\d y\,\d r & =\tfrac1{2\pi}\int_0^{\rho}\tfrac1{r}\int_{|y|\ge c_0}e^{-\frac{y^2}{r}}e^{\Delta |y|^{2-\delta_k}}\,\d y\,\d r \\ & \le \tfrac1{2\pi}\int_0^{\rho} r\int_{|y|\ge c_0}e^{-\frac{y^2}{4r}}\,\d y\,\d r \le \Con \int_0^{\rho} r^{\frac32}\,\d r \le \Con \rho^{\frac52}.
	\end{align*}
	This proves \eqref{thet}. For \eqref{nthet} we proceed in the exact same manner. Here we choose $c_0=1$. Then $|y|\le c_0$ integral is now bounded above by $e^{\Delta}\int_0^{\rho}\int_{|y|\le 1}p^2_{r}(y)\,\d y\,\d r \le \Con\rho^{\frac12}$. The other integral can be bounded exactly as before. This completes the proof.
\end{proof}

\begin{proof}[Proof of Lemma \ref{heat2}] Define
	\begin{align*}
		\Upsilon(B):=\int_B \int_{\R}[p_{t+\e-s}(y)-p_{t-s}(y)]^2\big[1+\Ex[\calZ_s^f(y)^k]^{\frac2k}\big]\,\d y\,\d s.
	\end{align*}
	
	We prove Lemma \ref{heat2} in two steps. In Step 1 we show that $\Upsilon((0,t-\e^{\frac14}))  \le \Con \e^{5/4}.$ Then in Step 2, we argue
	$\Upsilon([t-\e^{1/4},t-\e^a)) \le \Con \e^{2-\frac{3}2\min(a,1)}.$ Since $\Upsilon((0,t-\e^a))= \Upsilon((0,t-\e^{1/4}))+\Upsilon([t-\e^{1/4},t-\e^a))$, we get the desired bound.

	\medskip
	
	\noindent\textbf{Step 1}.
	For narrow wedge initial data applying Proposition \ref{umom} we get 
	\begin{align}
		\Upsilon((0,t-\e^{\frac14})) &  \le \Con\int_0^{t-\e^{\frac14}}\int_{\R}[p_{t+\e-s}(y)-p_{t-s}(y)]^2(1+s^{-1}e^{-y^2/s})\d y\, \d s, \label{bdg4}	
	\end{align}
	whereas for functional $f\in \hypz$ applying  Proposition \ref{umomf} we get
	\begin{align}
		\Upsilon((0,t-\e^{\frac14})) &  \le \Con\int_0^{t-\e^{\frac14}}\int_{\R}[p_{t+\e-s}(y)-p_{t-s}(y)]^2 e^{2\Delta |y|^{2-\delta_k}}\d y\, \d s, \label{bdg5}	
	\end{align}
 for some $\Delta(\mathfrak{q}_k,k,T)>0$.
	Note that $t-s\ge \e^{\frac14}$. Set
	\begin{align*}
		\chi:=\sup_{y\in \R,r\in [\e^{\frac14},T]} |p_{r+\e}(y)-p_{r}(y)| e^{\Delta|y|^{2-\delta_k}+|y|}.
	\end{align*}
	We claim that $\chi \le \Con \cdot \e^{\frac{5}8}$. As $t\in [T^{-1},T]$, assuming the claim we see that
	\begin{align*}
		\mbox{r.h.s.~of \eqref{bdg4}} \le \Con \cdot \chi^2 \int_0^T \int_{\R} [e^{-2|y|}+s^{-1}e^{-y^2/s-2|y|}]\,\d y\,\d s \le \Con \cdot \e^{\frac54},
	\end{align*}		
	and
	\begin{align*}
		\mbox{r.h.s.~of \eqref{bdg5}} \le \Con \cdot \chi^2 \int_0^T \,\d s\int_\R e^{-2|y|}\,\d y \le \Con \cdot \e^{\frac54},
	\end{align*}		
	which proves that 	$\Upsilon((0,t-\e^{\frac14})) \le \Con \e^{\frac54}$. Hence it suffices to prove the claim. Towards this end, we rely on mean value theorem. Observe that	
	\begin{align*}
		e^{\Delta |y|^{2-\delta_k}+|y|}\partial_{r}p_r(y) =\tfrac1{\sqrt{8\pi}} r^{-3/2}\left[\tfrac1ry^2e^{-\frac{y^2}{4r}}-e^{-\frac{y^2}{4r}}\right]e^{-\frac{y^2}{4r}+\Delta |y|^{2-\delta_k}+|y|}.
	\end{align*}
	Clearly $|\tfrac1ry^2e^{-\frac{y^2}{4r}}-e^{-\frac{y^2}{4r}}|$ is uniformly bounded as $y$ varies in $\R$ and $r$ varies in $(0,\infty)$. Furthermore 
	$$\sup_{y\in \R,r\in (0,T+1]} \exp\left(-\tfrac{y^2}{4r}+\Delta |y|^{2-\delta_k}+|y|\right) \le \sup_{y\in \R} \ \exp\left(-\tfrac{y^2}{4T+4}+\Delta |y|^{2-\delta_k}+|y|\right) \le \Con <\infty.$$
	Thus for all $r\in (0,T+1]$ and $y\in \R$ we have $|e^{\Delta |y|^{2-\delta_k}+|y|}\partial_{r}p_r(y)| \le \Con r^{-3/2}$. Hence
	$$\chi=\sup_{y\in \R,r\in [\e^{\frac14},T]}|p_{r+\e}(y)-p_{r}(y)| e^{\Delta|y|^{2-\delta_k}+|y|} \le \e \cdot \sup_{y\in \R,r\in [\e^{\frac14},T+1]} e^{\Delta |y|^{2-\delta_k}+|y|}|\partial_{r}p_r(y)|\le \Con \cdot \e^{\frac58}.$$ This completes our work for this step.  
	
	\medskip
	
	\noindent\textbf{Step 2.} In this step we prove that $\Upsilon([t-\e^{1/4},t-\e^a)) \le \Con \e^{2-\frac{3}2\min(a,1)}.$ We may choose $\e$ small enough so that $s\ge t-\e^{\frac14} \ge \frac1{2T}$. Hence the range of integration is bounded away from zero. We may thus combine the estimates from Proposition \ref{umom} and Proposition \ref{umomf} to conclude that for all $f\in \hypz$ we have
	\begin{align*}
		\Upsilon([t-\e^{1/4},t-\e^a))  \le \Con\int_{\e^{a}}^{\e^{\frac14}}\int_{\R}[p_{r+\e}(y)-p_{r}(y)]^2e^{2\Delta |y|^{2-\delta_k}}\d y\, \d r,
	\end{align*}
	for some $C, \Delta>0$ depending on $\mathfrak{q},k,T$ ({Note that this $\Delta$ might be different from the one appearing in \eqref{umomfm} as we have combined it with the estimate from \eqref{nwmom}}). We divide the $y$-integral into two parts base on $|y|\le 1$ and $|y|\ge 1$,
	\begin{align*}
		\Con\int_{\e^{a}}^{\e^{1/4}}\int_{|y|\le 1}[p_{r+\e}(y)-p_{r}(y)]^2e^{2\Delta |y|^{2-\delta_k}}\d y\, \d r  & \le \Con e^{2\Delta}\int_{\e^{a}}^{\e^{1/4}}\int_{ \R}[p_{r+\e}(y)-p_{r}(y)]^2\d y\, \d r \\ & = \tfrac{\Con e^{2\Delta}}{2\sqrt\pi}\int_{\e^{a}}^{\e^{1/4}}\left[\tfrac1{\sqrt{r+\e}}+\tfrac1{\sqrt{r}}-\tfrac2{\sqrt{r+\frac{\e}2}}\right]\, \d r  \\ & = \tfrac{\Con e^{2\Delta}}{\sqrt\pi}[f(\e^{1/4})-f(\e^{a})],
	\end{align*}
	where 
	\begin{align*}
		f(r) & :=\sqrt{r+\e}+\sqrt{r}-2\sqrt{r+\tfrac{\e}{2}} \\ & = (\sqrt{r+\e}-\sqrt{r+\tfrac{\e}2})+(\sqrt{r}-\sqrt{r+\tfrac{\e}2}) \\ & = \frac{\e/2}{\sqrt{r+\e}+\sqrt{r+\tfrac{\e}2}}-\frac{\e/2}{\sqrt{r}+\sqrt{r+\tfrac{\e}2}} \\ & = (\e/2)\frac{\sqrt{r}-\sqrt{r+\e}}{(\sqrt{r+\e}+\sqrt{r+\tfrac{\e}2})(\sqrt{r}+\sqrt{r+\tfrac{\e}2})} \\ & = -\tfrac12\e^2\left[(\sqrt{r}+\sqrt{r+\e})(\sqrt{r}+\sqrt{r+\tfrac\e2})(\sqrt{r+\e}+\sqrt{r+\tfrac\e2})\right]^{-1}. 
	\end{align*}
	From the above expression we see that $f(r)$ is negative and decreasing. Furthermore, $|f(r)| \le \sqrt2\e^2(r+\e)^{-3/2} \le \sqrt2\e^2 \min(r,\e)^{3/2}$. Hence $$|f(\e^{\frac14})-f(\e^a)| \le |f(\e^a)| \le \sqrt2\e^2 \min(\e^a,\e)^{3/2} \le \Con\e^{2-\frac{3}2\min(a,1)}.$$ 
	We now turn towards the other integral: $|y|\ge 1$ region. Since $|y|\ge 1$, we may choose $\e_0=\e_0(k,\Delta,\delta_k)>0$ so that the following two conditions hold for all $\e\le \e_0$:
	\begin{itemize}
		\item $e^{-\tfrac{y^2}{r+\e}} \le \e^2 e^{-\tfrac{y^2}{2r+2\e}}$ for all $r\in (0,\e^{1/4}]$, and $|y|\ge 1$.
		\item $\frac{y^2}{4r+4\e} \ge2\Delta|y|^{2-\delta_k}$ for all $r\in (0,\e^{\frac14}]$, and $|y|\ge 1$.
	\end{itemize}
	 Assuming $\e \le \e_0$, the above condition forces 
 {$$p_{r+\e}(y)^2 e^{2\Delta |y|^{2-\delta_k}} \le \Con\e^2 (r+\e)^{-1/2}p_{r+\e}(y/2), \qquad  p_{r}(y)^2 e^{2\Delta |y|^{2-\delta_k}} \le \Con\e^2 r^{-1/2}p_{r}(y/2).$$
 Using $[p_{r+\e}(y)-p_r(y)]^2 \le p_{r+\e}(y)^2+p_r(y)^2$ we thus get that 
 $$[p_{r+\e}(y)-p_{r}(y)]^2e^{2\Delta |y|^{2-\delta_k}} \le \Con \e^2 [(r+\e)^{-1/2}p_{r+\e}(y/2)+r^{-1/2}p_{r}(y/2)].$$ Integrating both sides of the above equation over $|y|\ge 1$ and $r\in [\e^a,\e^{1/4}]$ we get
	\begin{align*}
		& \int_{\e^{a}}^{\e^{1/4}}\int_{|y|\ge 1}[p_{r+\e}(y)-p_{r}(y)]^2e^{2\Delta |y|^{2-\delta_k}}\d y\, \d r \\ &  \le \Con \e^2 \int_{\e^{a}}^{\e^{1/4}}\int_{|y|\ge 1} [(r+\e)^{-1/2}p_{r+\e}(y/2)+r^{-1/2}p_{r}(y/2)] \d y\, \d r \\ & \le \Con \e^2\int_{\e^{a}}^{\e^{1/4}}[(r+\e)^{-1/2}+r^{-1/2}] \,\d r \\ & \le \Con \e^2\int_{0}^{\e^{1/4}} r^{-\frac12} \,\d r \le \Con \e^{2+1/8}.
	\end{align*}}
	Combining the above estimate along with the estimate for the $\{|y|\le 1\}$ integral, we get $\Upsilon([t-\e^{1/4},t-\e^a)) \le \Con \e^{2-\frac{3}2\min(a,1)},$ completing the proof.
\end{proof}


\section{Comparison to Fractional Brownian motion} \label{sec:fbm}

\subsection{Proof of Theorem \ref{t:comp}}
The goal of this section is to prove Theorem \ref{t:comp} which compares increments of \ac{SHE} with that of fractional Brownian motion. Towards this end, we first recall the definition of fractional Brownian motion.

\begin{definition} \label{fbm}
	A standard fractional Brownian motion (fBm($H$)) with Hurst parameter $H\in (0,1)$ is a continuous, mean-zero Gaussian process $X := \{X_t\}_{t>0}$ with $X_0=0$ a.s.~and
	\begin{align}\label{e:fbm}
		\Ex(|X_t-X_s|^2)=|t-s|^{2H}.
	\end{align}
\end{definition}
Note that $H=\frac12$ corresponds to a standard Brownian motion. In this article we will be only dealing with $H=\frac14$ fractional Brownian motion. We will reserve the notation $\B_t$ for fBM($1/4$). 

\medskip

Coming back to the proof of Theorem \ref{t:comp}, we consider another space-time stochastic process defined via \begin{align}\label{vhe}
	\calV_t(x):=\int_{(0,t)\times \R} p_{t-s}(y-x)\xi(\d s,\d y)
\end{align}
with the same underlying space-time white noise $\xi(t,x)$.  $\calV_t(x)$ solves the linear stochastic heat equation $\partial_t\calV=\frac12\partial_{xx}\calV+\xi$ with $\calV_0(x)\equiv 0$.

As outlined in the introduction, we show that the temporal increments of the non-linear stochastic heat equation \ac{SHE} (i.e., \eqref{she}) can be related to temporal increments of the linear one via the following proposition.

\begin{proposition} \label{p:comp} Fix $\mathfrak{q} \in ((0,\infty)\times (0,1])^{\mathbb{N}}$, $T>1$, $\beta\in (0,\frac25)$ and $k\ge 2$. Consider the \ac{SHE} defined in \eqref{she} with initial data $f$ in $\hypz$ class defined in Definition \ref{hyp}. There exist constants $\e_0=\e_0(\mathfrak{q}_k,k,\beta,T)>0$ and $\Con=\Con(\mathfrak{q}_k,k,\beta,T)>0$, such that for all $\e\in (0,\e_0)$ we have
	\begin{align}\label{mombd2}
		\sup_{t\in [T^{-1},T]}\Ex\left[|(\calZ_{t+\e}^f-\calZ_t^f)-\calZ_t^f(\calV_{t+\e}-\calV_t)|^k\right] \le \Con_{k,T} \e^{k\beta},
	\end{align}
	where $\calV_s:=\calV_s(0)$ defined in \eqref{vhe}. 
\end{proposition}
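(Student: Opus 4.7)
The plan is to decompose the difference $D := (\calZ_{t+\e}^f - \calZ_t^f) - \calZ_t^f(\calV_{t+\e}-\calV_t)$ into three pieces --- one deterministic, and two stochastic --- and control the $L^k$-norm of each separately. Writing out the mild formulation \eqref{e:ztemp} and subtracting the analogous formula for $\calV_{t+\e}-\calV_t$ multiplied by $\calZ_t^f$, I obtain
\[
D = D_1 + D_2 + D_3,
\]
where
\begin{align*}
D_1 &= \int_\R [p_{t+\e}(y)-p_t(y)]e^{f(y)}\,\d y, \\
D_2 &= \int_t^{t+\e}\!\!\int_\R p_{t+\e-s}(y)\bigl[\calZ^f(s,y)-\calZ_t^f\bigr]\,\xi(\d s,\d y), \\
D_3 &= \int_0^{t}\!\!\int_\R [p_{t+\e-s}(y)-p_{t-s}(y)]\bigl[\calZ^f(s,y)-\calZ_t^f\bigr]\,\xi(\d s,\d y).
\end{align*}
The term $D_2$ is a bona fide Walsh integral since the inner $\calZ_t^f$ is $\mathcal{F}_t$-measurable and the integration range lies in the future; in $D_3$ the same expression is interpreted as $\int_0^t\!\int \phi\calZ^f(s,y)\,\xi - \calZ_t^f\int_0^t\!\int\phi\,\xi$ with $\phi = p_{t+\e-s}-p_{t-s}$. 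It suffices to prove $\Ex[|D_i|^k]^{1/k}\le \Con\e^\beta$ for each $i\in\{1,2,3\}$ and invoke Minkowski.

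The term $D_1$ is the easiest. By the mean value theorem $p_{t+\e}(y)-p_t(y)=\e\,\partial_t p_{t^\star}(y)$ for some $t^\star\in[t,t+\e]$, and $|\partial_t p_r(y)|$ has Gaussian decay in $y$ uniformly in $r\in[T^{-1},T+1]$. Combining this with condition \eqref{mkcond}, which provides the sub-Gaussian growth $\Ex[e^{kf(y)}]^{1/k}\le e^{\lambda_k(1+|y|^{2-\delta_k})/k}$, Minkowski's inequality yields $\Ex[|D_1|^k]^{1/k}\le \Con\e$, comfortably dominated by $\Con\e^\beta$ for $\beta<1$.

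For $D_2$, Burkholder--Davis--Gundy gives
\[
\Ex[|D_2|^k]^{2/k} \le \Con\int_t^{t+\e}\!\!\int_\R p_{t+\e-s}(y)^2\,\Ex\bigl[|\calZ^f(s,y)-\calZ_t^f|^k\bigr]^{2/k}\,\d y\,\d s.
\]
Splitting the bracket as $[\calZ^f(s,y)-\calZ^f(s,0)]+[\calZ^f(s,0)-\calZ^f(t,0)]$, Proposition \ref{spmomf} (for small $|y|$) together with Proposition \ref{umomf} (for large $|y|$) controls the spatial piece by $\Con\min(|y|^{k\theta},e^{\Con|y|^{2-\delta_k}})$ for any $\theta\in(0,1/2)$, while Proposition \ref{tpmomf} controls the temporal piece by $\Con|s-t|^{k/4}$. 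Lemma \ref{heat} with $\rho=\e$ bounds the spatial contribution by $\Con\e^{1/2+\theta}$, and the temporal contribution is bounded by $\Con\e^{1/2}\cdot\int_0^\e\!\int p_r(y)^2\,\d y\,\d r = \Con\e$. Taking the $k/2$ power gives $\Ex[|D_2|^k]\le \Con\e^{k(1/2+\theta)/2}\le \Con\e^{k\beta}$ for any $\beta<1/2$ once $\theta$ is close enough to $1/2$.

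The main obstacle, and the source of the $\beta<2/5$ threshold, is $D_3$. Here I would split $[0,t]=[0,t-\e^a]\cup[t-\e^a,t]$ for a free parameter $a\in(1/2,1)$. On the early piece the regularity estimates degrade in $|s-t|$, so I would use only the crude bound $\Ex[|\calZ^f(s,y)-\calZ_t^f|^k]^{2/k}\le \Con[1+\Ex[\calZ^f(s,y)^k]^{2/k}]$ from Proposition \ref{umomf} combined with Lemma \ref{heat2}, yielding a contribution of order $\e^{2-3a/2}$. On the late piece I change variables to $r=t-s\in(0,\e^a)$: the spatial contribution, via $[p_{r+\e}(y)-p_r(y)]^2\le 2p_{r+\e}(y)^2+2p_r(y)^2$ and Lemma \ref{heat} with $\rho=2\e^a$, is of order $\e^{a(1/2+\theta)}$; the temporal contribution
\[
\Con\int_0^{\e^a}r^{1/2}\int_\R [p_{r+\e}(y)-p_r(y)]^2\,\d y\,\d r
\]
is of order $\e$ upon using the explicit identity $\int_\R[p_{r+\e}-p_r]^2\,\d y = (4\pi)^{-1/2}[(r+\e)^{-1/2}+r^{-1/2}-2(r+\e/2)^{-1/2}]$ and its resulting $O(\e^2 r^{-5/2})$ decay for $r\gg\e$. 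Matching the three exponents $2-3a/2$, $1$, and $a(1/2+\theta)$ against $2\beta$ forces the optimum $\theta$ near $1/2$ and $a\approx 4(1-\beta)/3$, which is attainable precisely when $\beta<2/5$. Raising to $k/2$ and collecting all bounds via Minkowski completes the argument.
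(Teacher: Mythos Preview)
Your proposal is correct and follows essentially the same route as the paper: the same three-way decomposition $D_1+D_2+D_3$ (the paper's deterministic term, $\mathsf{Rem}$, and $\Lambda((0,t))$), the same spatial/temporal splitting via Propositions \ref{umomf}, \ref{spmomf}, \ref{tpmomf} and Lemmas \ref{heat}--\ref{heat2}, the same cut at level $t-\e^a$ for $D_3$, and the same optimization $a=2/(2+\theta)$ yielding the $\beta<2/5$ threshold. Your temporal estimate on the late piece of $D_3$ is slightly sharper ($O(\e)$ via the $\e^2 r^{-5/2}$ decay rather than the paper's $O(\e^a)$ via the crude bound $\int[p_{r+\e}-p_r]^2\,\d y\le (\pi r)^{-1/2}$), but this does not change the binding exponent, and your explicit remark on interpreting the non-adapted integrand in $D_3$ is a point the paper leaves implicit.
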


Proof of Theorem \ref{t:comp} can be completed assuming Proposition \ref{p:comp}.

\begin{proof}[Proof of Theorem \ref{t:comp}]
	By Proposition 2.2 in \cite{davar} ($\alpha=2$ in their statement), one can find a fractional Brownian motion $\B_t$ with Hurst parameter $\frac14$ and another process $R_t$ on the same probability space as $\calV_t$ such that  $$\calV_t=(2/\pi)^{\frac14}\B_t+R_t.$$ 
 {By Minkowski's inequality followed by Cauchy-Schwarz inequality we have
 \begin{align*}
     & \Ex\left[|(\calZ_{t+\e}^f-\calZ_t^f)-(2/\pi)^{1/4}\calZ_t^f(\mathfrak{B}_{t+\e}-\mathfrak{B}_t)|^k\right]^{\frac1k} \\ & \le \Ex\left[|(\calZ_{t+\e}^f-\calZ_t^f)-\calZ_t^f(\calV_{t+\e}-\calV_t)|^k\right]^{\frac1k}+\Ex\left[|\calZ_t^f(R_{t+\e}-R_t)|^k\right]^{\frac1k} \\ & \le \Ex\left[|(\calZ_{t+\e}^f-\calZ_t^f)-\calZ_t^f(\calV_{t+\e}-\calV_t)|^k\right]^{\frac1k}+\Ex[|\calZ_t^f|^{2k}]^{\frac1{2k}}\Ex\left[|R_{t+\e}-R_t|^{2k}\right]^{\frac1{2k}}.
 \end{align*}
 Thanks to \eqref{mombd2}, the first term on the r.h.s.~of the above equation is  at most $\Con\e^{\beta}$ for some $\Con>0$ depending on $k,T$.
 According to Remark 2.4 in \cite{davar}, the process $R_t$ satisfies $\Ex[|R_{t+\e}-R_{t}|^{2k}] \le \Con_{K,T}\cdot\e^{2k}$ for all $t\in [T^{-1},T]$, $k\ge 1$ and $\e\in (0,1)$.} By Propositions \ref{umom} and \ref{umomf}, $\Ex[[\calZ_t^f]^{2k}] \le \Con$ for all $t\in [T^{-1},T]$ for functional initial data and narrow wedge initial data respectively. Combining all the estimates leads to the desired result.
\end{proof}

\begin{proof}[Proof of Proposition \ref{p:comp}] Recall the evolution equation of $\mathcal{Z}^f(t,x)$ from \eqref{fkdc} and definition of $\calV_t$ from \eqref{vhe}. At this point, we also invite the reader to review the proof strategy explained earlier in Section \ref{sec:pfidea}.

Fix any $a\ge 1/2$ and $\e \in (0,\frac1{4T^2})$. Since $t\ge T^{-1}$, we have $\frac1{2T} \le t-\e^a$. A bit of algebra leads to the following decomposition:
{\begin{align}\label{zvdec}
		\calZ_{t+\e}^f-\calZ_{t}^f-\calZ_t^f(\calV_{t+\e}-\calV_t) & =  \int_{\R} \left[p_{t+\e}(y)-p_t(y)\right]e^{f(y)}\d y +\Phi_1-\Phi_2 +\Lambda_1+\Lambda_2+\mathsf{Rem},
	\end{align}
	where  
	\begin{align}
            \Phi_1 & :=\int_{0}^{t-\e^a}\int_{\R}\left[p_{t+\e-s}(y)-p_{t-s}(y)\right]\calZ_s^f(y)\xi(\d s,\d y), \label{phi1} \\
            \Phi_2 & :=\calZ_t^f\int_{0}^{t-\e^a}\int_{\R}\left[p_{t+\e-s}(y)-p_{t-s}(y)\right]\xi(\d s,\d y), \label{phi2} \\
		\Lambda_1 & :=\int_{(t-\e^a,t)\times \R}\left[p_{t+\e-s}(y)-p_{t-s}(y)\right]\left[\calZ_s^f(y)-\calZ_{t-\e^a}^f\right]\xi(\d s,\d y), \label{lam1} \\
            \Lambda_2 & :=(\calZ_{t-\e^a}^f-\calZ_t^f)\int_{(t-\e^a,t)\times \R}\left[p_{t+\e-s}(y)-p_{t-s}(y)\right]\xi(\d s,\d y), \label{lam2} \\
		\mathsf{Rem} & :=\int_{(t,t+\e)\times \R}p_{t+\e-s}(y)\left[\calZ_s^f(y)-\calZ_t^f\right]\xi(\d s,\d y). \label{rem}
	\end{align}}
{Given this decomposition, using the elementary inequality $(\sum_{m=1}^n a_m)^k \le n^{k-1}\sum_{m=1}^n a_m^k$ we get that
\begin{equation}
    \label{momdn}
    \begin{aligned}
    & \Ex[|\calZ_{t+\e}^f-\calZ_{t}^f-\calZ_t^f(\calV_{t+\e}-\calV_t)|^k] \\ & \hspace{1cm}\le \Con \cdot \Ex\bigg[\big|\int_{\R} \left[p_{t+\e}(y)-p_t(y)\right]e^{f(y)}\d y\big|^k\bigg] \\ &  \hspace{2cm} + \Con\left[\Ex[|\Phi_1|^k]+\Ex[|\Phi_2|^k] +\Ex[|\Lambda_1|^k]+\Ex[|\Lambda_2|^k]+\Ex[|\mathsf{Rem}|^k]\right].
\end{aligned}
\end{equation}
}
{It is now suffices to estimates the moments of each term appearing on the r.h.s.~of \eqref{zvdec}.}

{For the first term, from \eqref{claimg} we know that
\begin{align} \label{claimgg}
    \Ex\bigg[\big|\int_{\R} \left[p_{t+\e}(y)-p_t(y)\right]e^{f(y)}\d y\big|^k\bigg] \le \Con \cdot \e^k.
\end{align}
For the $\Phi_i$ terms, we claim that there exist constants $\e_0=\e_0(\mathfrak{q}_k,k,T)>0$, $\Con=\Con(\mathfrak{q}_k,k,T)>0$, such that for all $t\in [T^{-1},T]$, $\e\in (0,\e_0)$, and $f\in \hypz$ we have
 \begin{align}\label{claimh}
   \Ex[|\Phi_1|^k+|\Phi_2|^k] \le \Con\cdot\e^{k(1-\frac{3a}{4})}.  
 \end{align}
 Note that by BDG inequality (Theorem \ref{thm9}) and the integral estimate from Lemma \ref{heat2}, we get that
 \begin{align*}
     \Ex[|\Phi_1|^k]^{\frac2k} \le \Con\int_{0}^{t-\e^a}\int_{\R}\left[p_{t+\e-s}(y)-p_{t-s}(y)\right]^2\Ex[|\calZ_s^f(y)|^k]^{\frac2k}\,\d y\,\d s \le \Con e^{2-\frac32a}.
 \end{align*}
	For $\Phi_2$ we apply H\"older's inequality to find that
 \begin{align*}
     \Ex[|\Phi_2|^k]^{\frac2k} & \le \Ex[|\calZ_t^f|^{2k}]^{\frac1k}\cdot \Ex\left[\left|\int_{0}^{t-\e^a}\int_{\R}\left[p_{t+\e-s}(y)-p_{t-s}(y)\right]\xi(\d s,\d y)\right|^{2k}\right]^{\frac1k} \\ & \le \Con \cdot \Ex[|\calZ_t^f|^{2k}]^{\frac1k}\cdot \int_{0}^{t-\e^a}\int_{\R}\left[p_{t+\e-s}(y)-p_{t-s}(y)\right]^2\,\d y\,\d s,
 \end{align*}
where the last inequality follows from another application of BDG inequality (Theorem \ref{thm9}).	Using the moment estimate from Proposition \ref{umomf} and the integral estimate from Lemma \ref{heat2} we see that the last expression above is at most $\Con e^{2-\frac32a}$. Combining this with the moment estimate of $\Phi_1$ obtained previously, verifies \eqref{claimh}.} 

We have the following moment estimates for $\m{Rem}$ defined in \eqref{rem}.
	\begin{lemma}\label{l4} Fix $\mathfrak{q} \in ((0,\infty)\times (0,1])^{\mathbb{N}}$, $T>1$ and $k\ge 2$. Fix $\theta\in (0,\frac12)$. There exist constants $\e_0=\e_0(\mathfrak{q}_k,k,T,\theta)>0$, $\Con=\Con(\mathfrak{q}_k,k,T,\theta)>0$, such that for all $t\in [T^{-1},T]$, $\e\in (0,\e_0)$, and $f\in \hypz$ we have
		$$\Ex[|\mathsf{Rem}|^k] \le \Con\cdot\e^{k(\frac\theta2+\frac14)}.$$
	\end{lemma}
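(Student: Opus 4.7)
\textbf{Proof plan for Lemma \ref{l4}.} The plan is to control $\mathsf{Rem}$ by the Burkholder--Davis--Gundy (BDG) inequality and then estimate the resulting deterministic time-space integral by combining the spatial regularity (Proposition \ref{spmomf}), the temporal regularity (Proposition \ref{tpmomf}), the uniform moment bound (Proposition \ref{umomf}), and the heat-kernel estimates of Lemma \ref{heat}. I read $\mathsf{Rem}$ as the stochastic integral over $(t,t+\e)\times\R$, which is the form forced by the decomposition \eqref{zvdec}. Applying BDG and then Minkowski's inequality on $L^{k/2}$ (valid since $k\ge 2$) reduces the task to bounding
\[
I(\e):=\int_t^{t+\e}\!\!\int_\R p_{t+\e-s}(y)^2\,\Ex\bigl[|\calZ^f_s(y)-\calZ^f_t(0)|^k\bigr]^{2/k}\,\d y\,\d s.
\]
The core step is the splitting $\calZ^f_s(y)-\calZ^f_t(0)=[\calZ^f_s(y)-\calZ^f_s(0)]+[\calZ^f_s(0)-\calZ^f_t(0)]$, after which the spatial and temporal pieces can be estimated separately.

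For the \emph{spatial} piece, Proposition \ref{spmomf} gives $\Ex[|\calZ^f_s(y)-\calZ^f_s(0)|^k]^{2/k}\le \Con|y|^{2\theta}$ when $|y|$ is below a fixed threshold, while the triangle inequality together with Proposition \ref{umomf} yields $\Ex[|\calZ^f_s(y)-\calZ^f_s(0)|^k]^{2/k}\le \Con\,e^{\Delta|y|^{2-\delta_k}}$ for all $y$. Taking the minimum of the two bounds and substituting $r=t+\e-s\in(0,\e)$, the spatial contribution to $I(\e)$ is exactly the quantity controlled by Lemma \ref{heat} equation \eqref{thet} with $\rho=\e$, and is therefore at most $\Con\,\e^{1/2+\theta}$ (provided $\e<\rho_0$).

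For the \emph{temporal} piece, Proposition \ref{tpmomf} (applied after enlarging $T$ to $T+1$ so that $s\in[t,t+\e]\subset[T^{-1},T+1]$) gives the $y$-independent bound $\Ex[|\calZ^f_s(0)-\calZ^f_t(0)|^k]^{2/k}\le \Con(s-t)^{1/2}$. The spatial integration then collapses to $\int_\R p_{t+\e-s}(y)^2\,\d y=(2\sqrt{\pi(t+\e-s)})^{-1}$, and the residual one-dimensional integral is evaluated by a Beta-type substitution $u=(s-t)/\e$:
\[
\int_t^{t+\e}\frac{(s-t)^{1/2}}{\sqrt{\pi(t+\e-s)}}\,\d s=\frac{\e}{\sqrt{\pi}}\int_0^1\sqrt{\tfrac{u}{1-u}}\,\d u\le \Con\,\e.
\]

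Adding the two contributions gives $I(\e)\le \Con(\e^{1/2+\theta}+\e)$, and since $\theta\in(0,\tfrac12)$ the $\e^{1/2+\theta}$ term dominates. Raising to the power $k/2$ yields the required estimate $\Ex[|\mathsf{Rem}|^k]\le \Con\,\e^{k(1/4+\theta/2)}$. The main technical subtlety is packaging the spatial control into the $\min\{|y|^{2\theta},e^{\Delta|y|^{2-\delta_k}}\}$ shape for which Lemma \ref{heat}\eqref{thet} is tailored; the constraint $\theta<\tfrac12$ enters at exactly this point, ensuring that the spatial term $\e^{1/2+\theta}$ is larger than (and hence absorbs) the $\e$ contribution from the temporal piece and that the $r^{2\theta-1/2}$ factor in Lemma \ref{heat} remains integrable near $r=0$.
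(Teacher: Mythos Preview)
Your proof is correct and follows essentially the same approach as the paper. The paper also splits $\calZ_s^f(y)-\calZ_t^f(0)$ into a spatial and a temporal increment (doing so at the level of the stochastic integral, i.e.\ writing $\mathsf{Rem}=\mathsf{Rem}_1+\mathsf{Rem}_2$ before applying BDG, rather than after as you do), then bounds the spatial part via Propositions \ref{spmomf}, \ref{umomf} and Lemma \ref{heat}\eqref{thet} with $\rho=\e$, and the temporal part via Proposition \ref{tpmomf} and the same Beta-type integral you compute; your observation that the domain of integration is $(t,t+\e)\times\R$ (not $(0,t)\times\R$ as the displayed definition \eqref{rem} literally reads) is also what the paper uses in its proof.
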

	   We have the following moment estimates for $\Lambda_1$ and $\Lambda_2$ defined in \eqref{lam1} and \eqref{lam2}.
	\begin{lemma}\label{l3} Fix $\mathfrak{q} \in ((0,\infty)\times (0,1])^{\mathbb{N}}$, $T>1$ and $k\ge 2$. Fix $a\in (\frac12,1)$ and $\theta\in (\frac12,1)$. There exist constants $\e_0=\e_0(\mathfrak{q}_k,k,T,a,\theta)>0$, $\Con=\Con(\mathfrak{q}_k,k,T,a,\theta)>0$, such that for all $t\in [T^{-1},T]$, $\e\in (0,\e_0)$, and $f\in \hypz$ we have  $$\Ex[|\Lambda_1|^k] \le \Con\cdot e^{ka(\frac\theta2+\frac14)}, \qquad \Ex[|\Lambda_2|^k] \le \Con\cdot e^{ka/2}.$$ 
	\end{lemma}
	
	We defer the proof of the above two lemmas to next subsection. Let us complete the proof of Proposition \ref{p:comp} assuming it. Fix any $\beta\in (0,\frac25)$. Choose $\theta$ such that $\frac{1+2\theta}{4+2\theta}=\beta$. The fact that $\beta<\frac25$ ensures $\theta<\frac12$. Set $a=\frac{2}{2+\theta}$ so that $\frac{a(2\theta+1)}4=1-\frac{3a}4=\frac{1+2\theta}{4+2\theta}=\beta$. Combining all the moment estimates from \eqref{claimgg}, \eqref{claimh}, Lemma \ref{l4}, and \ref{l3}, for this choice of $a$ and $\theta$, in view of the bound in \eqref{momdn}, we arrive at \eqref{mombd2}.
\end{proof}

\subsection{Proof of auxiliary lemmas} In this subsection we prove Lemma \ref{l4} and \ref{l3}. We fix $T>1$, $k\ge 2$, $a\in (\frac12,1)$, $\theta\in (0,\frac12)$ and $\gamma\in (0,\frac14)$. In all the proofs below the constants $\Con, \e_0$ depends on $a$, $k$, $T$, $\gamma$, $\theta$, and $f$ via $\lambda_k$ and $\delta_k$ defined in \eqref{mkcond}. We won't mention this further. 

\begin{proof}[Proof of Lemma \ref{l4}]
	For the remainder term $\mathsf{Rem}$ defined in \eqref{rem}, we rewrite it as $\mathsf{Rem}_{1}+\mathsf{Rem}_{2}$ where
	\begin{align*}
		\mathsf{Rem}_{1} & :=\int_{(t,t+\e)\times \R}p_{t+\e-s}(y)\left[\calZ_s^f(y)-\calZ_s^f(0)\right]\xi(\d s,\d y), \\ \mathsf{Rem}_{2} & :=\int_{(t,t+\e)\times \R}p_{t+\e-s}(y)\left[\calZ_s^f(0)-\calZ_t^f(0)\right]\xi(\d s,\d y).
	\end{align*}
	Applying Minkowski's inequality followed by BDG inequality (Theorem \ref{thm9}) separately for $\mathsf{Rem}_{1}$ and $\mathsf{Rem}_{2}$ we get
	\begin{align} \nonumber
		\Ex[|\mathsf{Rem}|^k]^{\frac2k} & \le \Con\left[\Ex[|\mathsf{Rem}_{1}|^k]^{\frac2k}+\Ex[|\mathsf{Rem}_{2}|^k]^{\frac2k}\right] \\ & \le \Con\int_t^{t+\e}\int_{\R}p^2_{t+\e-s}(y)\Ex\left[\big|\calZ_s^f(y)-\calZ_s^f(0)\big|^k\right]^{\frac2k}\,\d y\,\d s \label{bdg0} \\ & \hspace{2cm}+\Con\int_t^{t+\e}\int_{\R}p^2_{t+\e-s}(y)\Ex\left[\big|\calZ_s^f(0)-\calZ_t^f(0)\big|^k\right]^{\frac2k} \,\d y\,\d s\label{bdg1}.
	\end{align}
	For the expectation in the integrand of the term in \eqref{bdg0} we use Proposition \ref{spmomf} for small enough $|y|$. For large $|y|$, we use the generic moment estimate from Proposition \ref{umomf}. Together we have 
	\begin{align}\label{d3}
		\mbox{r.h.s.~of \eqref{bdg0}} \le \Con\int_t^{t+\e}\int_{\R}p^2_{t+\e-s}(y)\min\{ |y|^{2\theta}, e^{\Delta |y|^{2-\delta_k}}\}\,\d y\,\d s.
	\end{align}
	Taking $\rho=\e$ in the Lemma \ref{heat} we see that r.h.s.~of \eqref{d3} is at most $\Con\cdot \e^{\frac12+\theta}$. 
	For the expectation in the integrand of the term in \eqref{bdg1} we use Proposition \ref{tpmomf} to get
	\begin{align}\label{d4}
		\mbox{r.h.s.~of \eqref{bdg1}} \le \Con\int_t^{t+\e}\int_{\R}p^2_{t+\e-s}(y)|s-t|^{\frac12}\,\d y\,\d s = \Con\int_t^{t+\e}\tfrac1{2\sqrt{\pi(t+\e-s)}}|s-t|^{\frac12}\,\d s. 
	\end{align}
	Making a change of variable $x\mapsto \e^{-1}(s-t)$ we get 
	\begin{align*}
		\mbox{r.h.s.~of \eqref{d4}}  =\Con\tfrac1{2\sqrt\pi}\e\int_0^{1}x^{\frac12}(1-x)^{-\frac12}\d x \le \Con'\cdot \e.
	\end{align*} 
	Combining this with the estimate of the double integral in \eqref{bdg0} proves Lemma \ref{l4}.
\end{proof}

\begin{proof}[Proof of Lemma \ref{l3}] {Recall the expression of $\Lambda_1$ from \eqref{lam1}. Set
\begin{align*}
    \Lambda_{1,1} & :=\int_{(t-\e^a,t)\times \R}\left[p_{t+\e-s}(y)-p_{t-s}(y)\right]\left[\calZ_s^f(y)-\calZ_{s}^f(0)\right]\xi(\d s,\d y), \\
     \Lambda_{1,2} & :=\int_{(t-\e^a,t)\times \R}\left[p_{t+\e-s}(y)-p_{t-s}(y)\right]\left[\calZ_s^f(0)-\calZ_{t-\e^a}^f(0)\right]\xi(\d s,\d y),
\end{align*}
so that $\Lambda_1=\Lambda_{1,1}+\Lambda_{1,2}$
Since $\calZ_{t-\e^a}^f(0)$  is measurable w.r.t.~white noise on $[0,t-\e^a]\times \R$, it is independent of the white noise of $(t-\e^a,t)\times \R$. By Minkowski's inequality followed by an application of the BDG inequality (Theorem \ref{thm9}) separately for $\Lambda_{1,1}$ and $\Lambda_{1,2}$ to get that 
\begin{align} \nonumber	\left[\Ex\big[\left|\Lambda_1\right|^k\big]\right]^{\frac2k} & \le \Con \left[\left[\Ex\big[\left|\Lambda_{1,1}\right|^k\big]\right]^{\frac2k}+\left[\Ex\big[\left|\Lambda_{1,2}\right|^k\big]\right]^{\frac2k}\right] \\ & \le \Con\int_{t-\e^a}^{t}\int_{\R}[p_{t+\e-s}(y)-p_{t-s}(y)]^2\Ex[\left|\calZ_s^f(y)-\calZ_s^f(0)\right|^k]^{\frac2k}\,\d y\,\d s \\ \nonumber & \hspace{1cm}+\Con\int_{t-\e^a}^{t}\int_{\R}[p_{t+\e-s}(y)-p_{t-s}(y)]^2\Ex[\left|\calZ_s^f(0)-\calZ_{t-\e^a}^f(0)\right|^k]^{\frac2k}\,\d y\,\d s \\ & \label{1d} \le \Con{\int_{t-\e^a}^{t}\int_{\R}[p_{t+\e-s}(y)-p_{t-s}(y)]^2\min\{ |y|^{2\theta}, e^{\Delta |y|^{2-\delta_k}}\}\,\d y\,\d s} \\ & \hspace{2cm}+\Con\int_{t-\e^a}^{t}\int_{\R}[p_{t+\e-s}(y)-p_{t-s}(y)]^2|s-t+\e^a|^{\frac12}\,\d y\,\d s, \label{2d}		
	\end{align}
where the last inequality follows by applying the moment estimate and spatial regularity estimate  from Propositions \ref{umomf} and \ref{spmomf} and temporal regularity estimate from Proposition \ref{tpmomf}. For the double integral in \eqref{1d} we apply $[p_{t+\e-s}(y)-p_{t-s}(y)]^2 \le p_{t+\e-s}(y)^2+2p_{t-s}(y)^2$ and separate it into two double integrals. Using Lemma \ref{heat} with $\rho\mapsto \e^a+\e$, we get that each of the double integrals is at most $\Con \e^{a(\frac12+\theta)}$. For the double integral in \eqref{2d}, we can compute the $y$ integral explicitly. It is straightforward to check that
	\begin{align}\label{explicity}
		\int_{\R}[p_{t+\e-s}(y)-p_{t-s}(y)]^2\d y = \tfrac1{2\sqrt\pi}\left[\tfrac1{\sqrt{t+\e-s}}+\tfrac1{\sqrt{t-s}}-\tfrac2{\sqrt{t-s+\frac{\e}2}}\right] \le \tfrac1{\sqrt{\pi(t-s)}}.
	\end{align}
	Using the above bound along with a change of variable $r\mapsto t-s$ we get $\mbox{r.h.s.~of \eqref{2d}} \le \Con\int_{0}^{\e^a} r^{-1/2}(\e^a-r)^{1/2}\,\d r  \le \Con \e^{a}$. Combining this with the estimate for the double integral in \eqref{1d} we get the desired moment estimate for $\Lambda_1$.}

{For $\Lambda_2$ we apply H\"older's inequality to find that
 \begin{align*}
     \Ex[|\Lambda_2|^k]^{\frac2k} & \le \Ex[|\calZ_{t-\e^a}^f-\calZ_t^f|^{2k}]^{\frac1k}\cdot \Ex\left[\left|\int_{t-\e^a}^t\int_{\R}\left[p_{t+\e-s}(y)-p_{t-s}(y)\right]\xi(\d s,\d y)\right|\right]^{\frac1k} \\ & \le \Con \cdot \Ex[|\calZ_{t-\e^a}^f-\calZ_t^f|^{2k}]^{\frac1k}\cdot \int_{t-\e^a}^t\int_{\R}\left[p_{t+\e-s}(y)-p_{t-s}(y)\right]^2\,\d y\,\d s, \\ & \le \Con \cdot \Ex[|\calZ_{t-\e^a}^f-\calZ_t^f|^{2k}]^{\frac1k}\cdot \int_{t-\e^a}^t \frac{1}{\sqrt{\pi(t-s)}}\d s.
 \end{align*}
The inequality on the second line above follows from another application of BDG inequality (Theorem \ref{thm9}). The third line inequality follows from \eqref{explicity}. Evaluating the integral in the third line and using the temporal regularity estimate from Proposition \ref{tpmomf} shows that the last expression is at most $\Con \cdot \e^a$. This leads to the desired moment estimates for $\Ex[|\Lambda_2|^k]$.} 
\end{proof}

\section{Proof of Theorem \ref{t:main} and \ref{t:cor}} \label{sec:pf}
The goal of this section is to prove Theorem \ref{t:main}. We first state one point tail estimates for the unscaled KPZ equation that allow us to control negative moments of Stochastic Heat Equation as well. 

\begin{proposition}\label{hftail} Fix $T>1$ and $\mathfrak{p}\in (0,\infty)^5$. There exists a constant $\Con=\Con(\mathfrak{p},T)>0$ such that for all $t\in [T^{-1},T]$ and for all $f\in \hyph$ we have 
	\begin{align*}
		\Pr(|\calH_t^f| \ge s) \le \Con\exp(-\tfrac1\Con s^{3/2}).
	\end{align*}
\end{proposition}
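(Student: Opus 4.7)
The plan is to prove the two tails separately, reducing in each case to the narrow-wedge solution $\calZ^{\nw}$ via the convolution formula of Proposition \ref{p:conv}.

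For the \emph{upper} tail, the identity $\{\calH_t^f \ge s\} = \{\calZ_t^f \ge e^s\}$ combined with Markov's inequality at integer level $k$ gives $\Pr(\calH_t^f \ge s) \le e^{-ks}\Ex[(\calZ_t^f)^k]$. A $3/2$-power decay requires the $k$-th moment of $\calZ_t^f$ to grow like $\exp(c k^3)$. This sharp cubic-in-$k$ moment asymptotic is classical for the narrow-wedge solution, $\Ex[(\sqrt{2\pi t}\,\calZ_t^{\nw}(y) e^{y^2/(2t)})^k] \le \exp(c k^3 t)$, and is established in \cite{kpzgen} (this is the companion sharp upper bound to Proposition \ref{umom}). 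To transfer this to $f \in \hyph$, I would apply the H\"older trick used in the proof of Proposition \ref{umomf}: starting from $\calZ_t^f(0) = \int_{\R} \calZ_t^{\nw,y}(0) e^{f(y)}\,\d y$, one splits the integrand by a Gaussian weight $e^{y^2/(4t)}$, applies H\"older with conjugate exponents $k$ and $k/(k-1)$, and uses the sub-parabolic growth $f(y) \le \lambda(1 + |y|^{2-\delta})$ together with \eqref{idenp} to absorb $e^{kf(y)}$ against the Gaussian decay of $\Ex[(\calZ_t^{\nw}(y))^k]$. This yields $\Ex[(\calZ_t^f)^k] \le \Con \exp(c k^3)$ with $\Con, c$ depending only on $\mathfrak{p}$ and $T$. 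Optimizing in $k$ at $k \asymp s^{1/2}$ gives $\Pr(\calH_t^f \ge s) \le \Con \exp(-s^{3/2}/\Con)$.

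For the \emph{lower} tail, one uses condition \eqref{cii} of Definition \ref{hyp1}: there is an interval $\mathcal{I} \subset [-M,M]$ of length $\theta$ on which $f \ge -\kappa$. Positivity of the narrow-wedge solutions together with the convolution formula yields
\begin{align*}
\calZ_t^f(0) \;\ge\; e^{-\kappa} \int_{\mathcal{I}} \calZ_t^{\nw,y}(0)\,\d y,
\end{align*}
so $\{\calH_t^f \le -s\} \subset \{\log \int_{\mathcal{I}} \calZ_t^{\nw,y}(0)\,\d y \le \kappa - s\}$. The quantity $\log \int_{\mathcal{I}} \calZ_t^{\nw,y}\,\d y$ is the Cole-Hopf solution of the KPZ equation at $(t,0)$ started from the compactly supported initial data $\log \mathbf{1}_{\mathcal{I}}$, for which the sharp lower-tail bounds of \cite{kpzgen} apply directly. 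These bounds decay at least like $\exp(-c s^3)$ at any fixed $t > 0$, which comfortably dominates the target $\exp(-c s^{3/2})$. Alternatively, if a black-box lower-tail result for compactly supported data is not convenient, one can reduce to the narrow wedge by a minimum bound $\int_\mathcal{I} \calZ_t^{\nw,y}\,\d y \ge \theta \cdot \min_{y \in \mathcal{I}} \calZ_t^{\nw,y}$ combined with a union bound over an $O(\sqrt{t})$-spaced discretization of $\mathcal{I}$ and the sharp narrow-wedge one-point lower tail.

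The main obstacle is the upper-tail step: extracting the cubic-in-$k$ constant from \cite{kpzgen} in a form that survives the H\"older transfer to $f \in \hyph$ with constants depending only on $\mathfrak{p}$ and $T$, and verifying that the absorbing calculation goes through uniformly in $t \in [T^{-1},T]$ (the Gaussian weight $e^{y^2/(4t)}$ degenerates as $t \downarrow 0$, so the restriction $t \ge T^{-1}$ is essential and must be tracked). The lower-tail step is robust because the narrow-wedge $\exp(-cs^3)$ decay exceeds the needed $\exp(-cs^{3/2})$ by a wide margin, so no sharp constants need be tracked there and the $\mathfrak{p}$-dependence enters only through $\theta, \kappa, M$.
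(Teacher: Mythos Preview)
Your approach is correct in spirit but takes a genuinely different route from the paper. The paper's proof is essentially a one-line reduction: it passes to the $1{:}2{:}3$-scaled height $\h_t^f := t^{-1/3}(\calH_{2t}^f + t/12 - (2/3)\log(2t))$, verifies that for $t \in [T^{-1},T]$ the rescaled initial data $g(y) := t^{-1/3} f((2t)^{1/3} y)$ lies in the $\mathbf{Hyp}$ class of \cite{kpzgen} (checking their Eq.~(1.2)--(1.3) from conditions \eqref{ci}--\eqref{cii}), and then invokes Theorems~1.2 and~1.4 of \cite{kpzgen} directly for both tails at once. No moment computation or convolution is done in the paper itself.

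Your argument, by contrast, reconstructs the tails from first principles via the convolution formula. This is more self-contained and illustrates the mechanism, but it demands a sharper input than the paper records: you need the narrow-wedge moment bound with explicit cubic-in-$k$ exponent, i.e.\ $\Ex[(\sqrt{2\pi t}\,\calZ_t^{\nw}(0))^k] \le \exp(c k^3 t)$ with $c$ independent of $k$, whereas Proposition~\ref{umom} as stated only gives $\Con(k,T)$. That sharper bound is indeed available (e.g.\ the exact moment formula), but you should cite it directly rather than Proposition~\ref{umom}. Tracking $k$-dependence through the H\"older step also requires some care for Brownian initial data, where $M_k^f(z) = e^{k^2|z|/2}$ contributes an additional $e^{ck^3}$ factor; this works out but should be checked. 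For the lower tail, your primary route (recognizing $\int_{\mathcal{I}} \calZ_t^{\nw,y}\,\d y$ as SHE from $\mathbf{1}_{\mathcal{I}}$ and citing \cite{kpzgen}) is fine and in fact lands on the same citation the paper uses; your alternative via $\theta \cdot \min_{y\in\mathcal{I}} \calZ_t^{\nw,y}$ plus a discretized union bound is incomplete as written, since a union over a grid does not control the continuous minimum without an additional regularity input.
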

\begin{proof} Fix $T>1$ and $\mathfrak{p}\in (0,\infty)^5$. Consider the scaled KPZ equation $$\h_t^f:=t^{-1/3}(\calH_{2t}^f+\tfrac{t}{12}-\tfrac23\log(2t)).$$ Since $t\in [T^{-1},T]$ and we allow $\Con$ to be dependent on $T$, it is suffices to show $\Pr(|\h_t^f| \ge s) \le \Con\exp(-\tfrac1\Con s^{3/2})$. This is essentially proved in \cite{kpzgen} for narrow wedge initial data, Brownian initial data, and for certain functional initial data belonging to $\mathbf{Hyp}$ class in their Definition 1.1. Their $\mathbf{Hyp}$ class is adapted to the KPZ fixed point scaling and consists of conditions on $g(y):=t^{-1/3}f((2t)^{2/3}y)$. We briefly explain how our $\hyph$ class is contained in their class of initial data. Assume $f$ to be functional initial data. Recall the condition \ref{ci} from Definition \ref{hyp1}.	As $t\in [T^{-1},T]$ and $\delta >0$, one can get a constant $A=A(\lambda,\delta, T)$ so that  
	$$g(y) \le \lambda t^{-1/3}(1+|(2t)^{2/3}y|)^{2-\delta}\le A+\tfrac{1}{2\cdot2^{2/3}}y^2.$$
	This verifies Eq.~(1.2) in \cite{kpzgen} for $C\mapsto A$ and $\nu\mapsto \frac12$. 
	Furthermore due to condition \ref{cii}, there must exist an interval $\mathcal{I} \subset [-(2T)^{2/3}M,(2T)^{2/3}M]$ with size at least $(2T)^{2/3}\theta$ where $g(y)\ge -\kappa T^{-1/3}$. This verifies Eq.~(1.3) in \cite{kpzgen} for $\theta \mapsto (2T)^{2/3}\theta, \kappa \mapsto \kappa T^{-1/3},$ and $M \mapsto (2T)^{2/3}M.$
	
	\medskip
	
	Thus, $g\in \mathbf{Hyp}(A,\frac12,(2T)^{2/3}M,(2T)^{2/3}\theta,-
	\kappa T^{-1/3})$ according to Definition 1.1 in \cite{kpzgen}. Since we allow $\Con$ to be dependent on $T$, Theorem 1.2 and Theorem 1.4 in \cite{kpzgen} leads to the desired conclusion.
\end{proof}

\begin{proof}[Proof of Theorem \ref{t:main}] Fix $\mathfrak{p}\in (0,\infty)^5$ and fix any $f\in \hyph$. For simplicity, we drop the $f$ from the notation and write $\calH_t$ and $\calZ_t$ for $\calH_t^f$ and $\calZ_t^f$ respectively. We obtain a $\mathfrak{q} \in ((0,\infty)\times (0,1])^{\mathbb{N}}$ from Lemma \ref{gid} so that $\hyph \subset \hypz$. Given any $k\ge 2$, $\beta\in (0,\frac25)$ and $T>1$ we will show that there exists a probability space containing $\calH_t$ as well as a fractional Brownian motion $\B_t$ with index $\frac14$ and a constant $\Con>0$ depending on $k,\beta,T$ such that
	\begin{align}\label{e:mid}
		\sup_{s,t\in [T^{-1},T]} \Ex\left[\left|\calH_{s}-\calH_t-(2/\pi)^{1/4}(\B_{s}-\B_t)\right|^{k}\right] \le \Con |s-t|^{k\beta}.
	\end{align}
	Then Theorem \ref{t:main} follows from Kolmogorov's continuity theorem. 

 {Note that the one point exponential tail estimates from Proposition \ref{hftail} ensures moments of $\Ex[|\mathcal{H}_s|^k+|\mathcal{H}_t|^k]$ are uniformly bounded as $s,t$ varies on $[T^{-1},T]$. By properties of fractional Brownian motion, $\Ex[|\mathfrak{B}_s-\mathfrak{B}_t|^k]$ is uniformly bounded as $s,t$ varies on $[T^{-1},T]$. Thus by Minkowski's inequality the supremum on the left-hand side of \eqref{e:mid} is finite.} Thus, whenever $|s-t|$ is bounded away from zero, \eqref{e:mid} follows by choosing $\Con$ large enough. So, without loss of generality we may assume $s=t+\e$ with $\e\in (0,\e_0)$ where $\e_0$ is small enough. We divide the rest of the proof into two steps.
	
	\medskip
	
	\noindent\textbf{Step 1.} In this step we claim that
	\begin{align}\label{uint}
		\sup_{t\in [T^{-1},T]} \e^{-\frac{k}4}\Ex[\left|\calH_{t+\e}-\calH_t|^{k}\right] <\infty,
	\end{align}
	for all $\e\le \e_0$ where $\e_0=\e_0(\beta,\mathfrak{q},k,T)>0$ is chosen from Theorem \ref{t:comp}. Observe that
	$$|x|^{k} \le (e^{|x|}-1)^{k} \le |e^{-x}-1|^{k}+|e^{x}-1|^{k}.$$		Take $x=\calH_{t+\e}-\calH_{t}$ and then divide the aforementioned inequality by $\e^{\frac{k}4}$ to get
	\begin{align}
		\nonumber \e^{-\frac{k}4}|\calH_{t+\e}-\calH_t|^k & \le \calZ_{t+\e}^{-k} \cdot \e^{-\frac{k}4}|\calZ_{t+\e}-\calZ_{t}|^k+[\calZ_t]^{-k} \cdot \e^{-\frac{k}4}|\calZ_{t+\e}-\calZ_{t}|^k \\ & \nonumber \le \frac12\left([\calZ_{t+\e}]^{-2k}+\e^{-\frac{k}2}|\calZ_{t+\e}-\calZ_{t}|^{2k}\right)+ \frac12\left([\calZ_t]^{-2k}+\e^{-\frac{k}2}|\calZ_{t+\e}-\calZ_{t}|^{2k}\right) \\ & \label{in1} = \e^{-\frac{k}2}|\calZ_{t+\e}-\calZ_{t}|^{2k}+ \frac12\left([\calZ_{t+\e}]^{-2k}+[\calZ_t]^{-2k}\right), 
	\end{align}
	where the inequality in the second line is an application of the $ab \le \frac12(a^2+b^2)$ inequality applied with $a=\calZ_{t+\e}^{-k}$ and $b=\e^{-\frac{k}4}|\calZ_{t+\e}-\calZ_{t}|^k$ and with $a=\calZ_{t}^{-k}$ and $b=\e^{-\frac{k}4}|\calZ_{t+\e}-\calZ_{t}|^k$. Set \begin{align}\label{thetarv}
		\Theta_{t,\e}:=\calZ_{t+\e}-\calZ_t -(2/\pi)^{1/4}\calZ_t(\B_{t+\e}-\B_t),
	\end{align} 
 so that $\calZ_{t+\e}-\calZ_t =\Theta_{t,\e}+(2/\pi)^{\frac14}\calZ_t(\B_{t+\e}-\B_t)$. Using the elementary inequality $|a+b|^{2k} \le 2^{2k-1}(a^{2k}+b^{2k})$ with $a=\Theta_{t,\e}$ and $b=(2/\pi)^{1/4}\calZ_t(\B_{t+\e}-\B_t)$ we get that
 \begin{align*}
     |\calZ_{t+\e}-\calZ_{t}|^{2k} \le \Con \left[|\Theta_{t,\e}|^{2k}+\calZ_t^{2k}|\B_{t+\e}-\B_{t}|^{2k}\right].
 \end{align*}
 Plugging this inequality back in r.h.s.~of \eqref{in1} we get
	\begin{align*}
		\mbox{r.h.s.~of \eqref{in1}} & \le \Con\e^{-\frac{k}2}\left[|\Theta_{t,\e}|^{2k}+\calZ_t^{2k}|\B_{t+\e}-\B_{t}|^{2k}\right]+ \frac12\left(\calZ_{t+\e}^{-2k}+\calZ_t^{-2k}\right) \\ & \le \Con\e^{-\frac{k}2}|\Theta_{t,\e}|^{2k}+\Con\left[\calZ_t^{4k}+\e^{-k}|\B_{t+\e}-\B_{t}|^{4k}\right]+ \frac12\left(\calZ_{t+\e}^{-2k}+\calZ_t^{-2k}\right)
	\end{align*}
	where the last inequality is another application of $ab \le \frac12(a^2+b^2)$ inequality with $a=\mathcal{Z}_t^{2k}$ and $b=\e^{-k/2}|\mathfrak{B}_{t+\e}-\mathfrak{B}_t|^{2k}$. Taking expectations and utilizing \eqref{mombd} with $\beta=\frac13<\frac25$ (for $\e\le \e_0$) and the fact that $\e^{-\frac14}(\B_{t+\e}-\B_t)\stackrel{d}{=}Z \sim N(0,1)$ we get
	\begin{align*}
		\sup_{t\in [T^{-1},T]} \e^{-\frac{k}4}\Ex[\left|\calH_{t+\e}-\calH_t|^{k}\right] \le \Con\left[\e^{-\frac{k}2+\frac{2k}3}+\Ex[|\calZ_t|^{4k}]+\sup_{t\in [T^{-1},T+1]} \Ex\left([\calZ_t]^{-2k}+[\calZ_t]^{4k}\right)\right],
	\end{align*}
	for all $\e\in (0,\e_0)$ and for some constant $\Con>0$ depending on $\mathfrak{q}_k,k,T$. The moments of $\mathcal{Z}_t$ (both negative and positive) can be bounded uniformly on $t\in [T^{-1},T]$ using the uniform one point upper and lower tail bounds for $\calH_{t}=\log \calZ_{t}$ for $t\in [T^{-1},T]$ from Proposition \ref{hftail}. Thus the right hand side of above equation is finite uniformly in $\e \in (0,\e_0)$. This completes our work in this step.
	
	\medskip
	
	\noindent\textbf{Step 2.} In this step we show that
	\begin{align}\label{uint2}
		\sup_{t\in [T^{-1},T]}\e^{-\frac{k}2}\Ex\left[\left|\frac{\calZ_{t+\e}}{\calZ_{t}}-1-\calH_{t+\e}+\calH_t\right|^k\right] < \infty, 
	\end{align}
	for all $\e\le \e_0$ where $\e_0=\e_0(\beta,\mathfrak{q},2k,T)>0$ is chosen from Theorem \ref{t:comp}. Assuming \eqref{uint2}, proof of \eqref{e:mid} for $s=t+\e$ can be shown as follows. 
 
{First observe that
\begin{equation}
\label{ident2}
     \begin{aligned}
    & \e^{-\beta}\left[\calH_{t+\e}-\calH_t-(2/\pi)^{\frac14}(\B_{t+\e}-\B_t)\right] \\ & = \e^{-\beta}\left[\calH_{t+\e}-\calH_t -\frac{\calZ_{t+\e}}{\calZ_{t}}+1\right]+\e^{-\beta}\left[\frac{\calZ_{t+\e}}{\calZ_{t}}-1-(2/\pi)^{\frac14}(\B_{t+\e}-\B_t)\right] \\ & = \e^{-\beta}\left[\calH_{t+\e}-\calH_t -\frac{\calZ_{t+\e}}{\calZ_{t}}+1\right]+\calZ_t^{-1}\cdot \e^{-\beta}\Theta_{t,\e}
 \end{aligned}
\end{equation}
 where $\Theta_{t,\e}$ is defined in \eqref{thetarv}. Applying the elementary inequality $|a+bc|^k \le 2^{k-1}|a|^k +2^{k-2}b^{2k}+2^{k-2}c^{2k}$ with 
 \begin{align*}
     a= \e^{-\beta}\left[\calH_{t+\e}-\calH_t -\frac{\calZ_{t+\e}}{\calZ_{t}}+1\right], \quad b=\calZ_t^{-1}, \quad c=\e^{-\beta}\Theta_{t,\e},
 \end{align*}
 in view of the identity in \eqref{ident2}, we get that
	\begin{align*}
		\e^{-k\beta}\Ex\left[\big|\calH_{t+\e}-\calH_t-(2/\pi)^{1/4}(\B_{t+\e}-\B_t)\big|^k\right] & \le \Con\e^{-k\beta}\Ex\left[\big|\frac{\calZ_{t+\e}}{\calZ_{t}}-1-\calH_{t+\e}+\calH_t\big|^k\right] \\ & \hspace{1cm}+\Con\Ex[\calZ_t^{-2k}]+ \Con\e^{-2k\beta}\Ex[|\Theta_{t,\e}|^{2k}].
	\end{align*}}
	 Each of the three expectations appearing on r.h.s.~of the above equation is uniformly bounded by \eqref{uint2}, tail estimates from Proposition \ref{hftail} and Theorem \ref{t:comp} respectively. This verifies \eqref{e:mid} modulo \eqref{uint2}. Hence it suffices to show \eqref{uint2}. Towards this end, first observe that
	$$|e^x-1-x|^k \le x^{2k}e^{k|x|} \le x^{2k}(e^{-x}+e^x)^k.$$
	Take $x=\calH_{t+\e}-\calH_{t}$ and then divide the aforementioned inequality by $\e^{\frac{k}2}$ to get
	\begin{align}
		\label{inde} \e^{-\frac{k}2}\left|\frac{\calZ_{t+\e}}{\calZ_{t}}-1-\calH_{t+\e}+\calH_t\right|^k & \le \e^{-\frac{k}2}\left|\calH_{t+\e}-\calH_t\right|^{2k} \left|\frac{\calZ_{t}}{\calZ_{t+\e}}+\frac{\calZ_{t+\e}}{\calZ_{t}}\right|^k \\ & \le  \e^{-\frac{k}2}\left|\calH_{t+\e}-\calH_t\right|^{2k}\calZ_{t+\e}^{-k}\calZ_t^{-k}|\calZ_t^2+\calZ_{t+\e}^2|^k.  \nonumber
	\end{align}
	Using the AM-GM inequality we may bound the product of above four terms by sum of fourth powers of each term. This gives us
	\begin{align*}
		\mbox{r.h.s.~of \eqref{inde}} & \le \Con \left[\e^{-2k}\left|\calH_{t+\e}-\calH_t\right|^{8k}+\calZ_{t+\e}^{-4k}+\calZ_{t}^{-4k}+|\calZ_{t}^{2}+\calZ_{t+\e}^2|^{4k}\right] \\ & \le \Con \left[\e^{-2k}\left|\calH_{t+\e}-\calH_t\right|^{8k}+\calZ_{t+\e}^{-4k}+\calZ_{t}^{-4k}+\calZ_{t}^{8k}+\calZ_{t+\e}^{8k}\right],
	\end{align*}
where the last inequality follows by noting $|a^2+b^2|^{4k} \le 2^{4k-1}(a^{8k}+b^{8k})$ (with $a=\calZ_{t}^{2}$ and $b=\calZ_{t+\e}^{2})$. 
	The expectation of the first term on the right hand side of the above equation is uniformly bounded via  \eqref{uint}. The moments of $\mathcal{Z}_r$ (both negative and positive) can be bounded uniformly on $r\in [T^{-1},T]$ using the uniform one point upper and lower tail bounds for $\calH_{r}=\log \calZ_{r}$ for $r\in [T^{-1},T]$ from Proposition \ref{hftail}. Thus the expectation of the right hand side of the above equation is bounded uniformly in $\e \in (0,\e_0)$ and $t\in [T^{-1},T]$. This completes the proof.
\end{proof}

\begin{proof}[Proof of Theorem \ref{t:cor}] We appeal to Theorem \ref{t:main} to get a probability space containing the KPZ temporal process $\calH_t^f$ and fractional Brownian motion $\B_t$ such that $\calH_t^f-(2/\pi)^{1/4}\B_t$ is $\frac25^-$ H\"older continuous. Hence it is enough to understand the increment properties of $\B_{t}$ instead. Using \eqref{e:fbm} it follows that for $t_i\neq t_j$ the pairwise covariance $$\e^{-1/2}\operatorname{Cov}(\B_{t_i+\e}-\B_{t_i},\B_{t_j+\e}-\B_{t_j})\to 0$$  as $\e\downarrow 0$.  This leads to the proof of Part (1) of Theorem \ref{t:cor}. The quartic variation and the Law of iterated logarithm for $\B_t$ follows from Eq 1 in \cite{bquart} (see also \cite{bq1}) and Theorem 1.1 (C) in \cite{blil} respectively. Modulus of continuity and Hausdorff dimension for $\B_t$ follows from Theorem 2.1 and Theorem 5.3 in \cite{khos}.
\end{proof}



\bibliographystyle{alpha}




\end{document}